\newtheorem{theorem}{Theorem}
\newtheorem{lemma}[theorem]{Lemma}
\newtheorem{observation}[theorem]{Observation}
\newtheorem{algorithm}[theorem]{Algorithm}
\theoremstyle{definition}
\newtheorem*{defn}{Definition}
\newtheorem*{assumptions}{Assumptions}
\newcommand{\co}{\colon\thinspace}
\newcommand{\kcomp}{\overline{K}}
\newcommand{\knotinfo}{\emph{KnotInfo}}
\newcommand{\lk}[1]{\mathop{lk}(#1)}
\newcommand{\mfd}{\mathcal{M}}
\newcommand{\mfdc}{\mfd^c}
\newcommand{\nbd}[1]{\mathop{nbd}(#1)}
\newcommand{\R}{\mathbb{R}}
\newcommand{\regina}{\emph{Regina}}
\newcommand{\tri}{\mathcal{T}}
\newcommand{\tric}{\tri^c}
\newcommand{\Z}{\mathbb{Z}}
\newcommand{\cpp}{C%
    \nolinebreak\hspace{-.05em}\raisebox{.4ex}{\tiny\bf +}%
    \nolinebreak\hspace{-.10em}\raisebox{.4ex}{\tiny\bf +}}
\begin{document}

\title[A fast branching algorithm for unknot recognition]%
    {A fast branching algorithm for \\ unknot recognition with \\
    experimental polynomial-time behaviour}
\author{Benjamin A.\ Burton}
\address{School of Mathematics and Physics \\
    The University of Queensland \\
    Brisbane QLD 4072 \\
    Australia}
\email{bab@maths.uq.edu.au}
\author{Melih Ozlen}
\address{School of Mathematical and Geospatial Sciences \\
    RMIT University, GPO Box 2476V \\
    Melbourne VIC 3001 \\
    Australia}
\email{melih.ozlen@rmit.edu.au}
\thanks{The first author is supported by the Australian Research Council
    under the Discovery Projects funding scheme (projects
    DP1094516 and DP110101104).}
\subjclass[2000]{%
    Primary
    57M25, 
    90C57; 
    Secondary
    90C05} 
\keywords{Knot theory, algorithms, normal surfaces, linear programming,
    branch-and-bound}

\begin{abstract}
It is a major unsolved problem as to whether unknot
re\-cog\-ni\-tion---that is,
testing whether a given closed loop in $\R^3$ can be untangled to form a plain
circle---has a polynomial time algorithm.
In practice, trivial knots (which can be untangled) are typically easy to
identify using fast simplification techniques, whereas non-trivial knots
(which cannot be untangled) are more resistant to being conclusively
identified as such.
Here we present the first unknot recognition algorithm which
is always conclusive and, although exponential time in theory, exhibits a
clear polynomial time behaviour under exhaustive experimentation
even for non-trivial knots.

The algorithm draws on techniques from both topology and integer\,/\,linear
programming, and highlights the potential for new applications of techniques
from mathematical programming to difficult problems in low-dimensional
topology.
The exhaustive experimentation covers all $2977$ non-trivial prime knots
with $\leq 12$ crossings.
We also adapt our techniques to the important topological problems of
3-sphere recognition and the prime decomposition of 3-manifolds.
\end{abstract}

\maketitle

%
%

\section{Introduction}

One of the most well-known computational problems in knot theory is
\emph{unknot recognition}:
given a knot $K$ in $\R^3$ (i.e., a closed loop with
no self-intersections),
can it be deformed topologically (without passing through itself)
into a trivial unknotted circle?
If the answer is ``yes'' then $K$ is called a \emph{trivial} knot,
or the \emph{unknot} (as in Figure~\ref{fig-unknots});
if the answer is ``no'' then
$K$ is called a \emph{non-trivial} knot (as in Figure~\ref{fig-knots}).
This simple yes/no decision problem
is deceptively complex: the best known algorithms require
worst-case exponential time, and it is currently a major open problem as to
whether a polynomial time algorithm is possible.

Here we present the first algorithm for unknot recognition that
guarantees a conclusive result \emph{and},
though still worst-case exponential in theory,
behaves in practice like a polynomial-time algorithm
under systematic, exhaustive experimentation.
The algorithm uses an integrated blend of techniques from topology
(normal surfaces and 0-efficiency) and optimisation (integer and
linear programming), and showcases low-dimensional
topology as a new application area in which mathematical programming
can play a pivotal and practical role.

\begin{figure}[tb]
    \centering
    \subfigure[Some trivial knots\label{fig-unknots}]{%
        \qquad\includegraphics[scale=0.63]{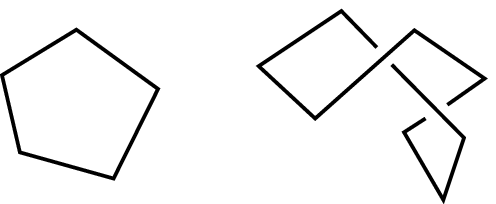}\qquad}
    \subfigure[Some non-trivial knots\label{fig-knots}]{%
        \qquad\includegraphics[scale=0.63]{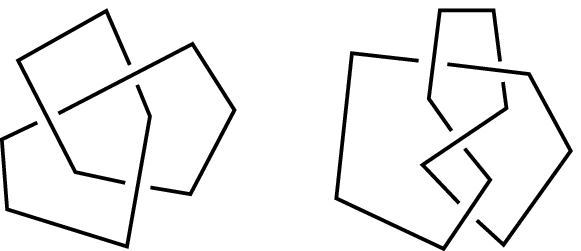}\qquad}
    \caption{Examples of knots in $\R^3$}
    \label{fig-knot}
\end{figure}

The input for unknot recognition is typically a \emph{knot diagram},
i.e., a piecewise-linear projection of the knot $K$ onto the plane
in which line segments ``cross'' over or under one another, as seen in
Figure~\ref{fig-knot}.
The input size is typically measured by the number of crossings
$c$.  This is a reasonable measure, since any $c$-crossing
knot diagram---regardless of how many line segments it uses---can be
deformed into a new diagram of the same
topological knot with just $O(c)$ line segments in total.

Only little is known about the computational complexity of unknot recognition.
The problem is known to lie in \textbf{NP} \cite{hass99-knotnp},
and also in \textbf{co-NP} if the generalised Riemann hypothesis holds
\cite{kuperberg14-conp}.\footnote{%
    Ian Agol gave a talk in 2002 outlining a proof that does not require
    the generalised Riemann hypothesis \cite{agol02-conp},
    but the details are yet to be published.}
Haken's original algorithm from the 1960s
\cite{haken61-knot} has been improved upon by many authors, but the best
derivatives still have worst-case exponential time.  There are alternative
algorithms, such as Dynnikov's grid simplification method
\cite{dynnikov03-knot}, but these are likewise exponential time or
worse.
Nevertheless,
the former results give us reason to believe
that unknot recognition might not be \textbf{NP}-complete,
and nowadays there is increasing discussion as to whether a polynomial
time algorithm could indeed exist \cite{dunfield11-spanning,hass12-conp}.

For inputs that are \emph{trivial} (i.e., topologically equivalent to
the unknot), solving unknot recognition appears to be easy
in practice.  There are widely-available simplification
tools that attempt to ``reduce'' the input to a smaller representation
of the same topological knot in polynomial time
\cite{andreeva02-webservice,regina}, and if they can reduce the
input all the way down to a circle with no crossings then the problem
is solved.
Experimentation suggests that
it is extremely difficult to find
``pathological'' representations of the unknot that do not simplify
in this way \cite{andreeva02-webservice,burton13-regina}.

For input knots that are \emph{non-trivial} (i.e., not equivalent to the
unknot), the situation is more difficult.  Here our simplification tools
cannot help: they might reduce the input somewhat, but we still need to
prove that the resulting knot cannot be completely untangled.
There are many computable knot
invariants that can assist with this task \cite{adams94,lickorish97},
but all known invariants either come with exponential time
algorithms or might lead to inconclusive results (or both).

In this sense, obtaining a ``no'' answer---that is, proving a knot to be
non-trivial---is the more difficult task for unknot recognition.
%
Our new algorithm is fast even in this more difficult case:
when run over the
{\knotinfo} database of all $2977$ prime knots with $\leq 12$ crossings
\cite{www-knotinfo-jun11},
it proves each of them to be non-trivial by solving a \emph{linear} number of
linear programming problems.
Combined with the aforementioned simplification tools
and established polynomial-time algorithms for linear programming
\cite{hacijan79-polylp,karmarkar84-new},
this yields the first algorithm for unknot recognition that guarantees
a conclusive result and \emph{in practice}
exhibits polynomial-time behaviour.

We note that, although our input knots all have
$\leq 12$ crossings, the underlying problems are significantly larger
than this figure suggests---our algorithm works in vector spaces of
dimension up to $350$ for these inputs.  As seen in
section~\ref{s-expt}, both the polynomial profile of our algorithm
and the exponential profile of the prior state-of-the-art algorithms
(against which we compare it)
are unambiguously clear over this data set.

The algorithm is structured as follows.
Given a $c$-crossing diagram of the input knot $K$,
we construct a corresponding 3-dimensional triangulation $\tri$
with $n \in O(c)$ tetrahedra.
We then search this triangulation for a certificate of
unknottedness, using Haken's framework of \emph{normal surface theory}
\cite{haken61-knot}.
The search criteria are deliberately weak, which allows us to perform
the search using a branch-and-bound method (based on integer and
linear programming).
The trade-off is that we might find a ``false certificate'',
which does \emph{not} certify unknottedness; however, in this case we
use our false certificate to shrink the triangulation to fewer
tetrahedra, a process which must terminate after at most $n$ iterations.

The bottleneck in this algorithm is the branch-and-bound phase,
which in the worst case must traverse a search tree of size
$O(3^n \cdot n)$.  However, the experimental performance is far better:
for \emph{every} one of our input knots, the linear programming
relaxations in the branch-and-bound scheme cause the search tree
to collapse to $\sim 8n$ nodes, yielding a polynomial-time search overall.
This is reminiscent of the simplex method for linear programming,
whose worst-case exponential time does not prevent it from
being the tool of choice in many practical applications.

We emphasise that this polynomial time behaviour is measured
purely by experiment.
We do not prove average-case, smoothed or generic complexity results;
though highly desirable, such results are scarce in the study of
3-dimensional triangulations.  We discuss the reasons for this
scarcity in Section~\ref{s-disc}.

Traditional algorithms based on normal surfaces do not use
optimisation; instead they perform an expensive
\emph{enumeration} of extreme rays of a polyhedral cone (see
\cite{burton10-dd,burton13-tree} for the computational details).
Our use of optimisation follows early ideas of Casson and
Jaco et al.\ \cite{jaco02-algorithms-essential}:
essentially we minimise the genus of a surface
in $\R^3$ that the knot bounds, which is zero if and only if the
knot is trivial.
The layout of our branch-and-bound search tree is inspired by earlier work
of the present authors on
normal surface enumeration algorithms \cite{burton13-tree}.

Optimisation approaches to unknot recognition and related problems
have been attempted before, but none have
exhibited polynomial-time behaviour to date.
The key difficulty is that we must
optimise a linear objective function (measuring genus)
over a \emph{non-convex} domain (which encodes potential surfaces).
In previous attempts:
\begin{itemize}
    \item Casson and Jaco et al.\ split the domain
    into an exponential number of convex pieces and perform linear
    programming over each \cite{jaco02-algorithms-essential}.
    This gives a useful upper bound on the running time of
    $O(3^n \times \mathrm{poly}(n))$.
    However, the resulting algorithm is unsuitable
    because for non-trivial input knots the running time also has a
    \emph{lower bound} of $\Omega(3^n \times \mathrm{poly}(n))$
    \cite{burton13-regina}.

    \item In prior work, the present authors express this
    optimisation as an integer program
    using precise search criteria that guarantee to find a
    true certificate if one exists (i.e., the criteria
    are not weakened as described earlier) \cite{burton12-crosscap}.
    The resulting integer programs are extremely difficult to solve
    in exact arithmetic:
    they yield useful bounds for invariants such as knot genus
    and crosscap number, but are unsuitable for decision problems
    such as unknot recognition.
\end{itemize}

Beyond unknot recognition, we also adapt our new algorithm to
the related topological problems of \emph{3-sphere recognition}
and \emph{prime decomposition} of 3-mani\-folds;
see Section~\ref{s-sphere} for details.


\section{Preliminaries}

Normal surface theory is a powerful algorithmic toolkit in
low-dimensional topology: it sits
at the heart of Haken's original unknot recognition algorithm
\cite{haken61-knot}
and provides the framework for the algorithms in this paper.
Here we give a very brief overview of knots, triangulations and normal
surfaces.
For more details on the role this theory plays in
unknot recognition and related topological problems, we refer the reader
to the excellent summary by Hass et~al.\ \cite{hass99-knotnp}.

We consider a \emph{knot} to be a piecewise linear simple closed curve
embedded in $\R^3$, formed from a finite number of line segments.
Two knots $K,K'$ are considered \emph{equivalent} if one can
be continuously deformed into the other without introducing
self-intersections.
Any knot equivalent to a simple planar polygon is said to be
\emph{trivial}, or the \emph{unknot};
all other knots are said to be \emph{non-trivial}.
Again, see Figure~\ref{fig-knot} for examples.

A \emph{knot diagram} is a projection of a knot into the plane with only
finitely many multiple points, each of which is a double point at which
two ``strands'' of the knot cross transversely, one ``passing over'' the other.
These double points are
called \emph{crossings}: the four knot diagrams in Figure~\ref{fig-knot}
have 0, 2, 3 and 4 crossings respectively.  Alternatively, knot diagrams can
be described as annotated 4-valent planar multigraphs;
see \cite{hass99-knotnp} for the details.
A knot diagram with $c$ crossings can (up to knot equivalence) be
described in
$O(c \log c)$ space (see \cite{hoste05-enumeration}
for some examples of encoding schemes),
and in this paper we treat knot diagrams as the usual means by which knots
are presented as input.

By adding a point at infinity, we can extend the ambient space from $\R^3$ to
the topological 3-sphere $S^3 \equiv \R^3 \cup \{\infty\}$.
For any knot $K$, we can then remove an open regular neighbourhood of
$K$ from $S^3$ (essentially ``drilling out'' the knot from $S^3$);
this yields a 3-manifold with torus boundary called the
\emph{knot complement} $\kcomp$.
Given a knot diagram with $c$ crossings, Hass et~al.\ show how to
construct a triangulation of $\kcomp$ with $O(c)$ tetrahedra in
$O(c \log c)$ time \cite[Lemma~7.2]{hass99-knotnp}.

Although the Hass et~al.\ construction produces a simplicial complex,
computational topologists often work with \emph{generalised triangulations},
which are more flexible and often significantly smaller.  A generalised
triangulation begins with $n$ abstract tetrahedra, and affinely identifies
(or ``glues'')
some or all of their $4n$ triangular faces in pairs.
Two different faces of the
same tetrahedron may be glued together; moreover, as a
consequence of the face gluings we may find that multiple edges of the
same tetrahedron become identified together, and likewise with vertices.
Unless otherwise specified, all triangulations in this paper are
generalised triangulations.

Of particular importance are \emph{one-vertex triangulations}, in which
all $4n$ tetrahedron vertices become identified as a single point.
Essentially, these are the 3-dimensional analogues
of well-known constructions in two dimensions, such as the
two-triangle torus and the two-triangle Klein bottle shown in
Figure~\ref{fig-torus-kb} (the different arrowheads indicate how
edges are glued together)---both of these
2-dimensional examples are one-vertex triangulations also.

\begin{figure}[tb]
    \centering
    \subfigure[A torus\label{fig-torus}]{%
        \qquad\includegraphics[scale=0.7]{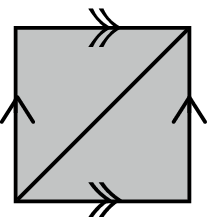}\qquad}%
    \quad
    \subfigure[A Klein bottle\label{fig-kb}]{%
        \qquad\includegraphics[scale=0.7]{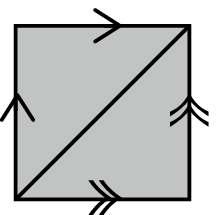}\qquad}
    \caption{Triangulated surfaces in two dimensions}
    \label{fig-torus-kb}
\end{figure}

If a triangulation $\tri$
represents some underlying 3-manifold $\mfd$,
then those tetrahedron faces of $\tri$ that are \emph{not} glued to a partner
together form a triangulated surface (possibly empty, possibly disconnected)
which we call the \emph{boundary} of $\tri$, denoted by $\partial \tri$;
topologically this represents the 3-manifold boundary $\partial \mfd$.

A \emph{normal surface} in $\tri$ is a surface $S$ that is properly embedded
(i.e., embedded so that $\partial S = S \cap \partial \tri$),
and which meets each tetrahedron of $\tri$ in a
(possibly empty) collection of curvilinear triangles and
quadrilaterals, as illustrated in Figure~\ref{fig-normaldiscs}.
In each tetrahedron these triangles and quadrilaterals are classified into
seven \emph{types} according to which edges of the tetrahedron they
meet, as illustrated in Figure~\ref{fig-normaltypes}:
four \emph{triangle types} (each ``truncating'' one of the four vertices),
and three \emph{quadrilateral types} (each separating the four
vertices into two pairs).

\begin{figure}[tb]
    \centering
    \subfigure[The surface $S$ meeting a
        single tetrahedron\label{fig-normaldiscs}]{%
        \qquad\includegraphics[scale=0.4]{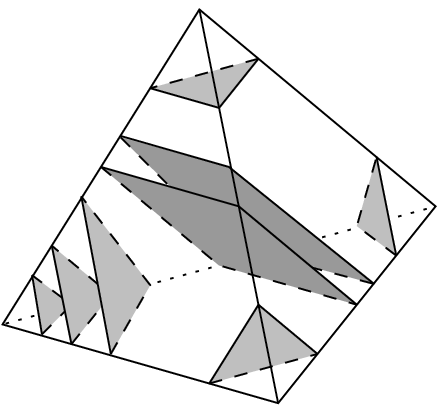}\qquad}
    \qquad\qquad
    \subfigure[The seven normal disc types\label{fig-normaltypes}]{%
        \includegraphics[scale=0.35]{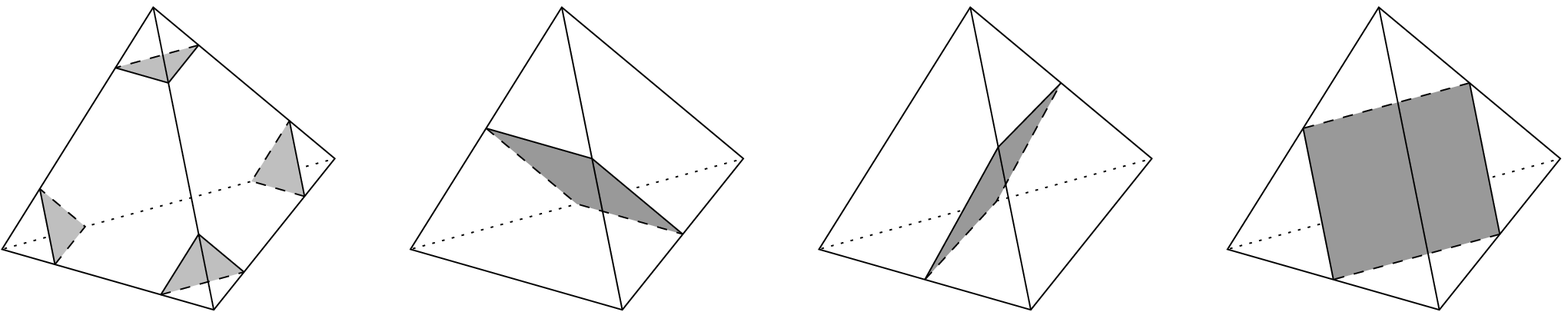}}
    \caption{Normal triangles and quadrilaterals}
\end{figure}

Any normal surface $S$ in an $n$-tetrahedron triangulation $\tri$ can be
described by a vector of $7n$ non-negative integers that counts the number
of triangles and quadrilaterals of each type in each tetrahedron;
we denote this by $\mathbf{v}(S) \in \Z^{7n}$.
The individual coordinates of $\mathbf{v}(S)$ that count triangles and
quadrilaterals are referred to as \emph{triangle} and
\emph{quadrilateral coordinates} respectively.
This vector uniquely identifies the surface (up to a certain class of
isotopy).  More generally, a result of Haken \cite{haken61-knot} shows
that an integer vector $\mathbf{x} \in \R^{7n}$
represents a normal surface if and only if:

\begin{enumerate}
    \item $\mathbf{x} \geq 0$;
    \item $A\mathbf{x}=0$, where $A$ is a matrix of up to $6n$
    linear \emph{matching equations} derived from the specific
    triangulation $\tri$;
    \item $\mathbf{x}$ satisfies the \emph{quadrilateral constraints},
    a collection of $n$ combinatorial constraints (one per tetrahedron)
    that require, for each tetrahedron, at most one of the three
    corresponding quadrilateral coordinates to be non-zero.
\end{enumerate}

In essence, the matching equations ensure that normal triangles and
quadrilaterals can be glued together across adjacent tetrahedra,
and the quadrilateral constraints ensure that
quadrilaterals within the same tetrahedron can avoid intersecting.
Any vector $\mathbf{x} \in \R^{7n}$ (real or integer) that
satisfies all three of these conditions is called \emph{admissible}.

The Euler characteristic of a surface, denoted $\chi$,
is a topological invariant that essentially encodes the genus of the surface.
In particular, an orientable surface of genus $g$ with $b$
boundary curves has Euler characteristic $\chi = 2-2g-b$,
and a non-orientable surface of genus $g$ with $b$
boundary curves has Euler characteristic $\chi = 2-g-b$.
Given any polygonal decomposition of a surface, its Euler characteristic
can be computed
as $\chi = V-E+F$, where $V$, $E$ and $F$ count vertices, edges and
2-faces respectively.

For normal surfaces within a fixed $n$-tetra\-hedron triangulation $\tri$,
their Euler characteristics can be expressed
using a homogeneous \emph{linear} function on normal coordinates.
That is, there is a homogeneous linear function $\chi \co \R^{7n} \to \R$
such that, if $S$ is any normal surface in $\tri$, then
$\chi(\mathbf{v}(S))$ is the Euler characteristic of $S$.
In fact there are many choices for such a function;
see \cite{burton12-crosscap, jaco95-algorithms-decomposition}
as well as the proof of Lemma~\ref{l-search-broad} in this paper
for various formulations.

\begin{figure}[tb]
    \centering
    \includegraphics[scale=0.6]{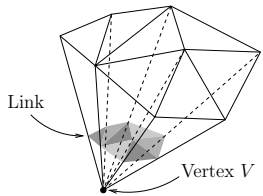}
    \caption{Building a vertex link from triangles}
    \label{fig-link}
\end{figure}

Let $V$ be a vertex of some triangulation $\tri$.  Then the
\emph{link} of $V$ is the frontier of a small regular neighbourhood of $V$.
In a 3-manifold triangulation, each vertex link is either a disc
(for a boundary vertex $V \in \partial \tri$) or a sphere
(for an internal vertex $V \notin \partial \tri$).
The link can be presented as a normal surface built from triangles
only (see Figure~\ref{fig-link}),
and if $\tri$ is a one-vertex triangulation then this normal surface
contains precisely one triangle of each type.

\begin{figure}[tb]
    \centering
    \includegraphics[scale=0.25]{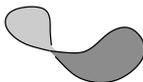}
    \caption{A disc bounded by a trivial knot}
    \label{fig-unknot-disc}
\end{figure}

Haken's original unknot recognition algorithm is based on the
observation that any trivial knot must bound an embedded disc in $\R^3$
(see Figure~\ref{fig-unknot-disc}).  In the knot complement
$\kcomp$, this corresponds to a properly embedded disc in $\kcomp$
that meets the boundary torus $\partial \kcomp$ in a non-trivial curve
(i.e., a curve that does not bound a disc in $\partial \kcomp$).
Moreover, we have:

\begin{theorem}[Haken] \label{t-haken}
    Let $K$ be a knot and let $\tri$ be a triangulation of the
    complement $\kcomp$.  Then $K$ is trivial if and only if
    $\tri$ contains a \emph{normal} disc whose boundary is a non-trivial
    curve in $\partial \tri$.
\end{theorem}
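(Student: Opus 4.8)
The plan is to prove the two implications separately. The reverse implication (a normal disc with non-trivial boundary exists $\Rightarrow$ $K$ is trivial) is essentially a statement in $3$-manifold topology, whereas the forward implication ($K$ trivial $\Rightarrow$ such a normal disc exists) rests on Haken's surface-normalisation procedure, and that is where the real work lies.

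For the reverse direction, suppose $\tri$ contains a normal disc $D$ whose boundary is a non-trivial curve in $\partial\tri$ (which triangulates the torus $\partial\kcomp$). Then $D$ is a properly embedded disc in $\kcomp$ whose boundary is an essential simple closed curve on $\partial\kcomp$ --- that is, a compressing disc. An essential curve on a torus is non-separating, so cutting $\kcomp$ along $D$ produces a compact $3$-manifold whose boundary is a single $2$-sphere. Knot complements in $S^3$ are irreducible, and I would carry this property over to the cut-open manifold (routine) to conclude that it is a $3$-ball; re-gluing the two disc scars then exhibits $\kcomp$ as a $3$-ball with a $1$-handle attached, which, being orientable, is a solid torus. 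Since the unknot is the only knot with solid-torus complement, $K$ is trivial.

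For the forward direction, suppose $K$ is trivial, so that $K$ bounds an embedded disc $\Delta$ in $S^3$: transport the evident spanning disc of a simple planar polygon along an ambient isotopy carrying that polygon to $K$. Deleting the open regular neighbourhood of $K$ cuts $\Delta$ down to a properly embedded disc $D_0 \subset \kcomp$ whose boundary is a curve on $\partial\kcomp$ meeting the meridian of $K$ exactly once, hence a non-trivial curve on the torus. I would then put $D_0$ into normal form using Haken's normalisation procedure --- the standard finite sequence of ambient isotopies and surgeries that strictly decreases a complexity measure (weight, and then the number of components of intersection with the faces of $\tri$, in lexicographic order) until a normal surface is reached --- while retaining, after every surgery, a disc component whose boundary is still non-trivial on $\partial\kcomp$.

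The point requiring attention --- and the crux of the argument --- is precisely that such a disc component can always be retained. Ambient isotopies preserve the isotopy class of the boundary and so leave a disc with non-trivial boundary. A compression of a disc $D$ along a simple closed curve lying in the interior of a face replaces $D$ by a $2$-sphere together with a disc having the same boundary, and we keep the disc. A boundary-compression of a disc $D$ along an arc in a boundary face splits $D$ into two discs $D_1, D_2$ with $[\partial D_1] + [\partial D_2] = [\partial D]$ in $H_1(\partial\kcomp)$; as $[\partial D] \neq 0$, at least one of $D_1, D_2$ has non-trivial boundary and we keep that one. Inducting on the surgeries, we always hold a disc with non-trivial boundary, and since the procedure terminates (the discarded $2$-spheres only decrease the complexity further), it terminates at a normal disc with non-trivial boundary, as required. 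For the catalogue of normalisation moves and the proof that the procedure terminates I would cite the standard treatments (Haken \cite{haken61-knot}, and Hass et~al.\ \cite{hass99-knotnp}); only the disc-tracking just described is special to the present situation.
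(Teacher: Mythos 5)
The paper does not prove this theorem: it is stated and attributed to Haken with a citation to \cite{haken61-knot}, and the normalisation machinery is left to the references (principally \cite{hass99-knotnp}). Your sketch is a correct account of the standard (essentially Haken's) argument, and you have correctly identified the crux of the forward direction as the disc-tracking through the normalisation moves. A few remarks on the two directions.

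For the reverse direction, your cut--reglue route is sound but longer than necessary, and it leans on the statement ``the unknot is the only knot with solid-torus complement,'' which is itself nontrivial and is usually proved by exactly the direct argument available here. The shorter route: since $D$ is a disc in $\kcomp$, the class $[\partial D]$ dies in $H_1(\kcomp)\cong\Z$ (generated by the meridian $\mu$); writing $[\partial D]=p[\mu]+q[\lambda]$ with $\lambda$ the preferred longitude (which is nullhomologous) forces $p=0$, and simplicity of $\partial D$ forces $q=\pm1$, so $\partial D$ is the longitude. The longitude cobounds an annulus $A$ with $K$ inside $N(K)$, and $D\cup A$ is then an embedded disc in $S^3$ bounded by $K$, so $K$ is trivial. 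If you prefer the cut--reglue route, the cleanest way to see that the cut-open manifold is a $3$-ball is to observe that it is a compact submanifold of $S^3$ with $2$-sphere boundary and invoke Alexander's theorem; transferring irreducibility works too, but then you must check that the ball bounded in $\kcomp$ by a sphere interior to the cut-open piece is disjoint from $D$ (true, by a short connectedness argument, but not quite ``routine'' without saying so).

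For the forward direction, your tracking argument is correct. The homology identity $[\partial D_1]+[\partial D_2]=[\partial D]$ for a boundary-compression holds because $D_1\sqcup D_2$ and $D$ cobound a collar of the compressing bigon, so they agree in $H_2(\kcomp,\partial\kcomp)$, and the boundary map to $H_1(\partial\kcomp)$ gives the identity (up to a sign that does not affect the conclusion). Two small points worth making explicit. First, you should note why the tracked disc cannot ``normalise to nothing'': a properly embedded disc of weight zero has boundary disjoint from the $1$-skeleton of $\partial\tri$, hence contained in a single triangle of $\partial\tri$, hence bounding a disc there --- contradicting essentiality of its boundary. Thus the complexity is bounded below by a positive quantity and the procedure must terminate at a genuine normal disc. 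Second, the compression along a circle in a face need not be a surgery at all --- it may be realised by an isotopy when the curve bounds on both sides a region cobounding a ball --- but either way the kept component is a disc with the same boundary, so your case analysis covers it.

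Overall the proposal is correct and follows the same route as the cited source; the comments above are refinements rather than repairs.
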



\section{The algorithm} \label{s-alg}

In this paper we describe the new algorithm at a fairly high level.
For details of the underlying data structures, we refer the
reader to the thoroughly documented source code for the software
package {\regina} \cite{regina}, in which this algorithm is
implemented.\footnote{%
    See in particular the routine \texttt{NTriangulation::isSolidTorus()}.}

At the highest level, the new algorithm operates as follows.

\begin{algorithm}[Unknot recognition] \label{alg-unknot}
To test if an input knot $K$ is trivial, given a
knot diagram of $K$:
\begin{enumerate}
    \item \label{en-alg-tri}
    Build a triangulation of the knot complement $\kcomp$.

    \item \label{en-alg-simplify}
    Make this into a one-vertex triangulation $\tri$ of $\kcomp$
    without increasing the number of tetrahedra.
    Let the number of tetrahedra in $\tri$ be $n$.

    \item \label{en-alg-search}
    Search for a connected normal surface $S$ in $\tri$ which is not the vertex
    link and has positive Euler characteristic.

    \begin{itemize}
    \item
    If no such $S$ exists, then the knot $K$ is non-trivial.

    \item
    If $S$ is a disc whose boundary is a non-trivial curve in
    $\partial \tri$, then the knot $K$ is trivial.

    \item
    Otherwise we modify $\tri$ by crushing the surface $S$
    to obtain a new triangulation $\tri'$ of
    $\kcomp$ with fewer than $n$ tetrahedra,
    and return to step~\ref{en-alg-simplify} using this
    new triangulation $\tri'$ instead.
    \end{itemize}
\end{enumerate}
\end{algorithm}

To build the initial triangulation of $\kcomp$
in step~\ref{en-alg-tri}, we use the
method of Hass et~al.\ \cite[Lemma~7.2]{hass99-knotnp}
as noted earlier in the preliminaries section.
For the subsequent steps, we give details in separate sections below:
\begin{itemize}

    \item To make a one-vertex triangulation
    in step~\ref{en-alg-simplify}, we combine the Jaco-Rubin\-stein
    crushing procedure (described in Section~\ref{s-jr}) with a combinatorial
    expansion procedure (described in Section~\ref{s-onevtx}).

    \item To find the surface $S$ in step~\ref{en-alg-search}, we use
    techniques from integer and linear programming
    (described in Section~\ref{s-search}).
    This search is the main bottleneck---and hence the most critical
    component---of the algorithm.

    \item To crush $S$ in step~\ref{en-alg-search}, we once more use the
    Jaco-Rubinstein procedure (again see Section~\ref{s-jr}).
\end{itemize}

After presenting these details, in Section~\ref{s-alg-proofs}
we prove the algorithm correct and analyse its
time complexity.  In particular, we show that the search for $S$
in step~\ref{en-alg-search} is the only potential source of
exponential time.  That is, if the search for $S$
exhibits polynomial time in practice (as we quantify experimentally
in Section~\ref{s-expt}), then this translates to
polynomial-time behaviour in practice for the algorithm as a whole.


\subsection{The Jaco-Rubinstein crushing procedure} \label{s-jr}

Steps~\ref{en-alg-simplify} and \ref{en-alg-search} of the algorithm
above make use of the \emph{crushing procedure} of Jaco and Rubinstein,
which modifies a triangulation by ``destructively'' crushing a normal
surface within it.\footnote{%
    It is clear which surfaces we crush in step~\ref{en-alg-search}
    of the main algorithm.  In step~\ref{en-alg-simplify} we crush temporary
    surfaces that we create on the fly; see Section~\ref{s-onevtx}
    for the details.}
Here we outline this procedure, and then prove in Lemma~\ref{l-crush}
that (i)~in our setting it yields a smaller triangulation
of the knot complement $\kcomp$,
and (ii)~this smaller triangulation can be obtained in linear time.

Our outline of the crushing procedure is necessarily brief.
See the original Jaco-Rubinstein paper \cite{jaco03-0-efficiency}
for the full details, or \cite{burton14-crushing-dcg}
for a simplified approach.

Mathematically, the crushing procedure operates as follows.
Given a normal surface $S$ within a triangulation $\tri$, we
first cut $\tri$ open
along $S$ and then collapse all of the triangles and quadrilaterals of $S$
(which now appear twice each on the boundary) to points,
as shown in Figure~\ref{fig-crushbdry}.
This results
in a new cell complex that is typically not a triangulation,
but instead is built from a combination of
tetrahedra, footballs and/or pillows as
illustrated in Figure~\ref{fig-jrpieces}.  We then flatten each football
to an edge and each pillow to a triangular face, as shown in
Figure~\ref{fig-jrcrush}, resulting in a new triangulation $\tric$
that is the final result of the crushing procedure.
In general this final triangulation $\tric$
might be disconnected, might represent a
different 3-manifold from $\tri$, or might not even represent a
3-manifold at all.
We now make two important observations, both due to Jaco and Rubinstein
\cite{jaco03-0-efficiency}:

\begin{figure}[tb]
    \centering
    \subfigure[Cutting along and then collapsing
        a surface\label{fig-crushbdry}]{%
        \includegraphics[scale=0.58]{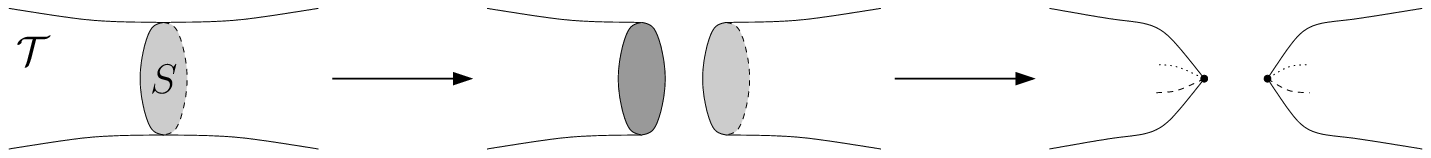}}
    \subfigure[Intermediate pieces\label{fig-jrpieces}]{%
        \includegraphics[scale=0.28]{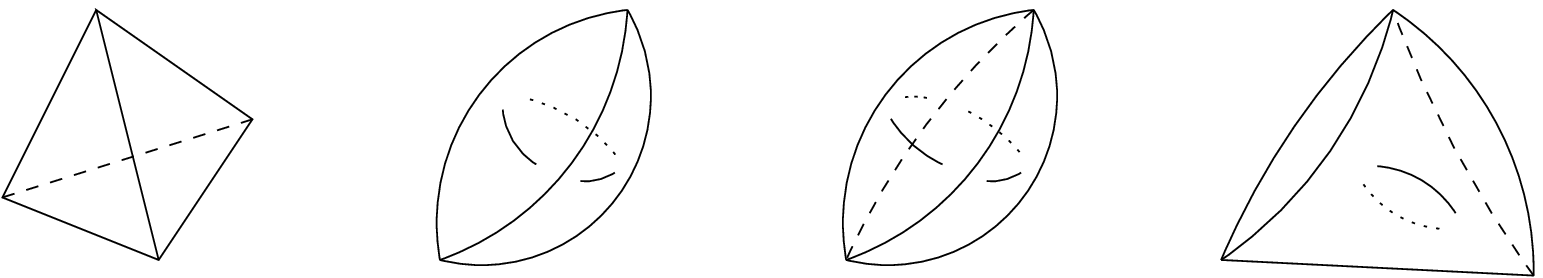}}
    \qquad\qquad
    \subfigure[Flattening footballs and pillows\label{fig-jrcrush}]{%
        \includegraphics[scale=0.47]{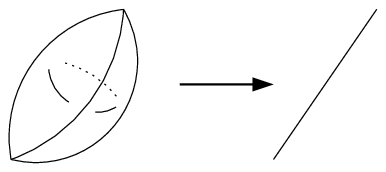}\quad
        \includegraphics[scale=0.47]{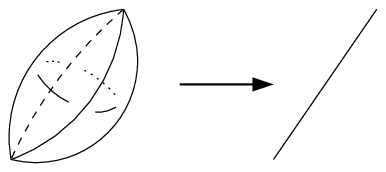}\quad
        \includegraphics[scale=0.47]{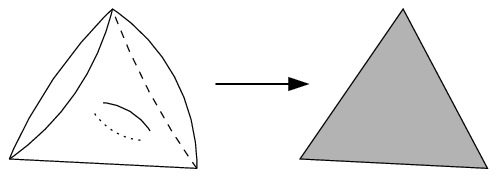}}
    \caption{The Jaco-Rubinstein crushing procedure}
    \label{fig-jrcrushall}
\end{figure}

\begin{observation} \label{o-crush-shrink}
    The tetrahedra in $\tric$ (i.e., those that ``survive'' the
    crushing process) correspond precisely to those
    tetrahedra in $\tri$ that do not contain any quadrilaterals of $S$.
    In particular, unless $S$ is a collection of vertex links, the number of
    tetrahedra in $\tric$ will be strictly smaller than in $\tri$.
\end{observation}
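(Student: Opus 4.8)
The plan is to follow the Jaco--Rubinstein analysis and work through the crushing procedure one tetrahedron at a time. Fix a tetrahedron $\Delta$ of $\tri$. Since $S$ is a normal surface, $S\cap\Delta$ consists of a (possibly empty) parallel family of normal triangles truncating each vertex of $\Delta$, together with a single (possibly empty) parallel family of normal quadrilaterals of one fixed type (by the quadrilateral constraints, at most one quadrilateral type occurs in any tetrahedron). Cutting $\Delta$ along $S\cap\Delta$ therefore splits it into: a small tetrahedral \emph{corner piece} cut off at each vertex that meets a normal triangle; a \emph{slab piece} $D\times I$ between each consecutive pair of parallel normal discs, with $D$ a triangle or a quadrilateral; and exactly one remaining \emph{core piece}. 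The observation will follow once I identify, in each case, which of these pieces survives the subsequent collapse-and-flatten steps as a genuine tetrahedron.

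First I would handle the case in which $\Delta$ contains no quadrilateral of $S$. Then the core piece is combinatorially $\Delta$ with some subset of its corners truncated by normal triangles, and collapsing each truncating triangle to a point merely undoes the truncation, recovering a combinatorial tetrahedron; when $S$ misses $\Delta$ entirely the core piece is already $\Delta$ itself. Each corner piece has exactly one of the collapsed normal discs on its boundary, and each slab piece has two, so after collapsing these discs they become footballs or pillows and are removed by the flattening step. Hence a quadrilateral-free $\Delta$ contributes exactly one tetrahedron to $\tric$.

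Next I would handle the case in which $\Delta$ contains a quadrilateral of $S$. Cutting a tetrahedron along a single normal quadrilateral separates its four vertices into two pairs and yields two pieces, each combinatorially a triangular prism; the remaining parallel normal discs of $\Delta$ only refine this further into prisms, $D\times I$ slabs and corner pieces. The key point is that none of these is of the truncated-tetrahedron type: after collapsing the normal discs on their boundaries and performing the flattening step, every one of them reduces to a football or a pillow and disappears, so such a $\Delta$ contributes no tetrahedron to $\tric$. This verification---checking that all of the pieces arising from a tetrahedron carrying a quadrilateral flatten away---is the fiddly combinatorial heart of the statement, and I expect it to be the main obstacle; rather than reproducing it I would invoke the detailed case analysis of Jaco and Rubinstein \cite{jaco03-0-efficiency} (see also the simplified treatment in \cite{burton14-crushing-dcg}).

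Putting the two cases together gives a bijection between the tetrahedra of $\tric$ and the tetrahedra of $\tri$ that meet no quadrilateral of $S$; in particular the number of tetrahedra strictly decreases unless $S$ has no quadrilateral coordinates at all. To close the ``in particular'' clause I would note that a quadrilateral-free normal surface must be a disjoint union of parallel copies of vertex links: across each face of $\tri$ the matching equations equate the number of normal triangles of $S$ adjacent to each endpoint on the two sides, so the number of triangles of $S$ near a given vertex of $\tri$ is well defined, and $S$ is exactly the corresponding collection of vertex-link copies; conversely vertex links carry no quadrilaterals. Thus the number of tetrahedra fails to decrease precisely when $S$ is a collection of vertex links, as claimed.
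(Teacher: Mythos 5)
The paper itself gives no proof of this observation; it is stated as one of two facts attributed directly to Jaco and Rubinstein \cite{jaco03-0-efficiency}, with no argument supplied. Your outline follows exactly the Jaco--Rubinstein route: decompose each tetrahedron along its normal discs into a core piece plus product-like slabs and corner pieces, track which of these survive the collapse-and-flatten steps, and observe that only the core of a quadrilateral-free tetrahedron persists as a genuine tetrahedron while every piece of a quadrilateral-bearing tetrahedron reduces to a football or a pillow. Deferring the fiddly verification of the quadrilateral case to \cite{jaco03-0-efficiency} is reasonable, since that is precisely the source the paper points to (and \cite{burton14-crushing-dcg} gives a streamlined account). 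Your argument for the ``in particular'' clause---that a normal surface with no quadrilaterals must be a disjoint union of parallel vertex-link copies, because the matching equations force the triangle count near each vertex to be constant across all tetrahedra in the star of that vertex (which works since vertex links in a manifold triangulation are connected)---is a standard fact, correctly argued, and is the one part of the observation you prove in full rather than cite. I see no gaps.
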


\begin{observation} \label{o-crush-change}
    If $S$ is a normal sphere or disc and $\tri$ triangulates an
    orientable 3-manifold $\mfd$ (with or without boundary), then
    the result $\tric$ triangulates some new
    3-manifold $\mfdc$ (possibly disconnected or empty) that is
    obtained from $\mfd$ by zero or more of the following operations:
    \begin{enumerate}[(i)]
        \item cutting $\mfd$ open along spheres and filling the
        resulting boundary spheres with 3-balls;
        \item cutting $\mfd$ open along properly embedded discs;
        \item capping boundary spheres of $\mfd$ with 3-balls;
        \item deleting entire connected components that are any of
        the 3-ball, the
        3-sphere, projective space $\R P^3$, the lens space $L_{3,1}$
        or the product space $S^2 \times S^1$.
    \end{enumerate}
\end{observation}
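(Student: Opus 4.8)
The plan is to follow the three stages of the crushing procedure in turn --- (1) cutting $\tri$ open along $S$; (2) collapsing to points the two boundary copies that each normal triangle and quadrilateral of $S$ acquires after the cut; (3) flattening the resulting footballs and pillows --- identifying the topological effect of each stage separately. Stage~(1) is honest topology: it produces a triangulation of the cut-open manifold $\mfd \,|\, S$, and since $\mfd$ is orientable and $S$ is a sphere or a disc, passing from $\mfd$ to $\mfd \,|\, S$ is exactly an instance of cutting along a sphere (leaving two new boundary spheres, possibly disconnecting $\mfd$) or of cutting along a properly embedded disc. It is therefore stages~(2) and~(3) that must be shown to account for the ``filling with 3-balls'' in~(i), the capping in~(iii), and the deletions in~(iv).

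First I would treat stages~(2) and~(3) locally. Each tetrahedron of $\tri$ containing no quadrilateral of $S$ survives untouched (Observation~\ref{o-crush-shrink}), while each tetrahedron meeting a quadrilateral of $S$ is carried by the collapse of stage~(2) into one of the footballs or pillows of Figure~\ref{fig-jrpieces}, which stage~(3) then flattens. The dichotomy to establish is that flattening a football (to an edge) or a pillow (to a triangular face) \emph{either} is a homeomorphism of the underlying space \emph{or} collapses an entire connected component of the intermediate complex down to the empty triangulation. In the first case, assembling these local homeomorphisms over a regular neighbourhood $N(S) \cong S \times [-1,1]$ of $S$, I would show that the net effect of stages~(2)--(3) is to replace $N(S)$ by two 3-balls capping the two copies of $S$ when $S$ is a sphere, and by nothing --- a mere collar removal --- when $S$ is a disc, leaving the rest of $\mfd \,|\, S$ alone. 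Together with stage~(1) this yields operation~(i) in the sphere case and operation~(ii) in the disc case; operation~(iii) arises when the capped sphere is a genuine boundary sphere of $\mfd$, which is exactly the degenerate situation in which $S$ is isotopic into $\partial\mfd$.

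In the second case of the dichotomy some connected component $C$ of the intermediate complex is annihilated entirely, so that the corresponding subtriangulation of $\tri$ has the property that \emph{every} one of its tetrahedra meets a quadrilateral of $S$ (again by Observation~\ref{o-crush-shrink}). The task then reduces to classifying which orientable 3-manifolds, with or without boundary, admit a triangulation with this property relative to a normal sphere or disc, and I expect this classification to be the real obstacle: it is a global combinatorial argument about how the quadrilaterals of $S$ thread through the whole of $C$, and it is precisely the content of the Jaco--Rubinstein crushing theorem, whose conclusion is that $C$ must be the 3-ball, the 3-sphere, $\R P^3$, the lens space $L_{3,1}$ or the product space $S^2 \times S^1$ --- giving operation~(iv). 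Rather than reproduce that analysis I would invoke it from \cite{jaco03-0-efficiency} (or the streamlined treatment of \cite{burton14-crushing-dcg}), and then assemble the pieces above --- honest cutting in stage~(1), and local homeomorphisms or whole-component deletions in stages~(2)--(3) --- to obtain the list (i)--(iv).
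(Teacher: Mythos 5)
The paper does not prove this observation: it attributes it to Jaco and Rubinstein \cite{jaco03-0-efficiency} and points to \cite[Theorem~2]{burton14-crushing-dcg} for a simplified proof, with no argument of its own. Your proposal ends in the same place, invoking those same references for the essential content, so on the level of what the paper actually supplies you agree.

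The intermediate sketch, however, rests on a dichotomy that does not hold and is not how the cited proofs are structured: you assert that flattening a single football or pillow is either a homeomorphism of the underlying space or collapses an entire connected component to the empty triangulation. This fails when a pillow's two triangular faces (or a football's two end-edges) are already identified with one another through the rest of the complex --- for example because both are glued to faces of one and the same surviving tetrahedron, or because footballs and pillows form chains that close up on themselves. In those configurations the flattening performs a genuine topological identification that is neither a homeomorphism nor a whole-component deletion; these are precisely the cases that give rise to the cuts, caps and deletions of operations (i)--(iv), and working through them is the actual content of \cite[Theorem~2]{burton14-crushing-dcg}, which decomposes crushing into atomic flattening moves and checks case by case what each move does to the underlying 3-manifold. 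Relatedly, the step of assembling ``local homeomorphisms over a regular neighbourhood $N(S) \cong S \times [-1,1]$'' does not carry through: stage~(2) has already destroyed the product structure (each copy of $S$ has been collapsed to a point), and stage~(3) acts on a cell complex of surviving tetrahedra, footballs and pillows that need not respect any product decomposition of the original neighbourhood. The citation you fall back on is adequate and matches the paper's own treatment, but the surrounding sketch misdescribes the mechanism it is invoking.
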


See \cite[Theorem~2]{burton14-crushing-dcg} for a simplified treatment and proof
of Observation~\ref{o-crush-change}.

Our result below uses both of these observations to show that
crushing does what we need in our setting.  Moreover, it shows through
amortised complexity arguments that, with the right
implementation, crushing can be carried out in linear time.

\begin{lemma} \label{l-crush}
    Given a triangulation $\tri$ of a knot complement $\kcomp$
    with $n>1$ tetrahedra and a connected normal surface $S$
    in $\tri$ which is not a vertex link and which
    has positive Euler characteristic,
    we can crush $S$ using the Jaco-Rubinstein
    procedure and convert the result into a new triangulation
    $\tri'$ of $\kcomp$ with strictly fewer than $n$ tetrahedra,
    all in $O(n)$ time.
\end{lemma}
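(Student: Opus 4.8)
First I would establish the topological claim that $\tri'$ still triangulates $\kcomp$, and only afterwards worry about the linear-time bound. Let me think about what $S$ can be. Since $S$ is connected, not a vertex link, and has $\chi(S) > 0$, the only possibilities for a properly embedded surface are a sphere ($\chi = 2$), a disc ($\chi = 1$), or $\R P^2$ ($\chi = 1$). I want to rule out $\R P^2$: the knot complement $\kcomp$ is orientable and irreducible-ish enough... actually more carefully, $\kcomp$ embeds in $S^3$ which contains no embedded $\R P^2$ (since $\R P^2$ does not embed in any orientable 3-manifold — it has nonzero $\Z/2$ self-intersection). So $S$ is a sphere or a disc, and Observation~\ref{o-crush-change} applies directly.

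Second, I would run through the operations (i)--(iv) of Observation~\ref{o-crush-change} and check that each one, applied to $\kcomp$, gives back $\kcomp$ (or something from which $\kcomp$ is recovered as the unique non-trivial component). The key facts: $\kcomp$ has a single torus boundary component and no sphere boundary components, so operation (iii) never fires. For operation (i): any embedded sphere in $\kcomp \subset S^3$ is separating (orientability) and bounds a ball on at least one side (since $S^3$ is irreducible, so is every knot complement, e.g. by the loop theorem / standard 3-manifold theory); cutting along such a sphere and capping with balls therefore yields $\kcomp$ together with one or more 3-spheres. For operation (ii): cutting $\kcomp$ open along a properly embedded disc $D$. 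If $\partial D$ is trivial in $\partial \kcomp$ then $D$ is boundary-parallel or bounds a ball, and cutting is harmless; but such a $D$ gives $\chi$-positive surface that is genuinely a disc with trivial boundary — however, note this is exactly the ``disc with non-trivial boundary'' case excluded from the hypotheses of the main algorithm only for the \emph{return} to step~\ref{en-alg-simplify}, whereas here $S$ is allowed to be \emph{any} such surface; actually wait — Algorithm~\ref{alg-unknot} only reaches the crushing branch (and hence invokes this lemma) when $S$ is \emph{not} a disc with non-trivial boundary, so either $S$ is a sphere, or $S$ is an $\R P^2$ (excluded above), or $S$ is a disc with trivial boundary in $\partial\kcomp$. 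A disc with trivial boundary in the torus $\partial \kcomp$ bounds a disc $D'$ in $\partial\kcomp$, and $D \cup D'$ is a sphere; cutting $\kcomp$ along $D$ is then the same as doing a sphere-compression, which by irreducibility again just splits off a ball. In all cases the operations (i)--(iv) reduce to: remove some collection of balls / $S^3$'s. So the disjoint union $\tric$ consists of one component homeomorphic to $\kcomp$ together with some number of $S^3$'s and $B^3$'s, and these junk components are detected and discarded (component (iv) already deletes the $S^3$'s; any leftover $B^3$ components are recognised by having a single tetrahedron-free... rather, by being recognisably a ball). So $\tri'$, defined as the unique surviving component, triangulates $\kcomp$.

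Third, the cardinality bound: since $S$ is not a vertex link and is connected, it contains at least one quadrilateral (a sphere or disc built only of triangles in a one-vertex triangulation would be the vertex link or bound trivially — more precisely, any connected normal surface with no quadrilaterals is a vertex link, a standard fact), so Observation~\ref{o-crush-shrink} gives that $\tric$ has strictly fewer than $n$ tetrahedra, and hence so does $\tri'$.

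**The main obstacle.** The genuinely delicate part is the $O(n)$ time bound. Cutting along $S$, collapsing triangles and quadrilaterals, identifying and flattening footballs and pillows, and then locating the non-trivial component and discarding ball/sphere components must all be done in linear time. The naive concern is that flattening a football or pillow can create new footballs or pillows, so one could imagine a cascade of length $\Theta(n)$ each costing $\Theta(n)$, giving $\Theta(n^2)$. The fix — and the reason the lemma says ``with the right implementation'' — is an amortised argument: each tetrahedron, face, edge, or vertex of $\tri$ is ``touched'' a bounded number of times over the whole collapse-and-flatten process, because each flattening move strictly decreases a simple non-negative combinatorial potential (e.g.\ total number of cells), and each move is charged to the cells it destroys. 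Recognising the ball and $S^3$ components is linear because after crushing, each such component has a bounded-size triangulation (the junk components produced by Jaco-Rubinstein crushing of a sphere/disc are from a fixed finite list, or can be certified by a bounded-depth combinatorial check) — alternatively one simply identifies $\kcomp$ as the unique component with non-empty boundary (its torus boundary survives), and every other component is discarded, which is an $O(n)$ scan. I would structure the proof so that the topological identification of $\tri'$ comes first and cleanly, and then devote the bulk of the argument to setting up the potential function and the charging scheme that delivers the amortised linear-time claim, citing \cite{burton14-crushing-dcg} for the mechanics of the individual flattening moves.
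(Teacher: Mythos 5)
Your topological analysis follows essentially the same route as the paper (ruling out $\R P^2$, running through Observation~\ref{o-crush-change} case by case), but contains a genuine gap. You explicitly set aside the case in which $S$ is a disc with non-trivial boundary in $\partial\kcomp$, on the grounds that Algorithm~\ref{alg-unknot} only invokes the lemma for other $S$. But the lemma is stated unconditionally and is also cited inside the proof of Lemma~\ref{l-onevtx}, so the proof must cover this case. When $S$ is such a disc, cutting along it converts $\kcomp$ into a 3-ball, so \emph{no} component of $\tric$ triangulates $\kcomp$; the paper handles this by observing that $K$ must then be trivial and substituting the standard one-tetrahedron solid torus. Your argument, as written, would conclude falsely that ``the disjoint union $\tric$ consists of one component homeomorphic to $\kcomp$ together with junk.'' Relatedly, your primary method for identifying junk---``each such component has a bounded-size triangulation''---is simply false: ball and sphere components produced by crushing can be arbitrarily large. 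Your fallback---scan for the component with boundary---is closer to the paper's approach, but one must distinguish torus boundary from sphere boundary (the paper does so via the Euler characteristic of each component), and one must handle the possibility that no torus-boundary component exists at all.

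For the $O(n)$ time bound your route diverges genuinely from the paper's, and here the paper's approach is cleaner. You propose literally executing the iterative flatten-footballs-and-pillows procedure and controlling the cascade by an amortised potential-function argument. This might be made to work, but it requires real care: one must show that each flattening move can be \emph{located} in $O(1)$ amortised time (not merely that there are $O(n)$ moves), which means maintaining a worklist and detecting newly created footballs and pillows without rescanning, and the proposal doesn't address this. The paper avoids the cascade entirely by never materialising the intermediate cell complex: it directly determines the final gluings of $\tric$ by, for each face $\phi$ of a surviving tetrahedron, tracing a path across quadrilateral-containing tetrahedra until another surviving tetrahedron or $\partial\tri$ is reached. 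Each tetrahedron of $\tri$ appears on at most two such paths (one for each direction of crossing), giving total path length $O(n)$ with $O(1)$ work per step. This sidesteps the charging scheme you would need to design and makes the linear-time claim immediate.
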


\begin{proof}
    The only connected surfaces with positive Euler characteristic are
    the sphere, the disc, and the projective plane.  Since $\tri$
    triangulates a knot complement in $S^3$ and $S^3$ does not contain
    an embedded projective plane, we conclude that
    our surface $S$ is either a
    sphere or a disc.

    We first show how to build a smaller triangulation $\tri'$ of $\kcomp$.
    We begin by crushing $S$ in $\tri$ to obtain an intermediate
    triangulation $\tric$.
    Since $\kcomp$ is a knot complement, the list of possible
    topological changes from Observation~\ref{o-crush-change}
    becomes much simpler:
    \begin{enumerate}[(i)]
        \item Any embedded sphere in a knot complement bounds a ball, and so
        the first operation (cutting along spheres and filling
        the boundaries with balls) is equivalent to adding new
        (disconnected) 3-sphere components.

        \item The result of cutting along a properly embedded disc $D$
        depends on the boundary of this disc.
        If the boundary of $D$ is \emph{trivial}
        on the torus $\partial \kcomp$, then since $\kcomp$ is a knot complement
        it must be true that $D$ and some portion of $\partial \kcomp$
        together bound a 3-ball, and cutting along $D$
        simply adds a new 3-ball component.
        If the boundary of $D$ is \emph{non-trivial}
        on $\partial \kcomp$ then $K$ must be a
        trivial knot, and cutting along $D$ converts the entire manifold
        $\kcomp$ into the 3-ball.

        \item The only boundary spheres that might occur will be the
        boundaries of new 3-ball components created in the previous step.
        Capping these spheres with 3-balls has the effect of converting
        these new 3-ball components into new 3-sphere components instead.

        \item If we begin with a knot complement, none of the operations
        above can ever produce the manifolds
        $\R P^3$, $L_{3,1}$ or $S^2 \times S^1$.
        Therefore the only connected components that we could ever delete
        are 3-balls and 3-spheres.
    \end{enumerate}

    In other words, the topological changes introduced in the
    intermediate triangulation $\tric$ are limited to
    possibly replacing $\kcomp$ with a 3-ball (but only if the knot
    $K$ is trivial),
    and adding or removing 3-ball and/or 3-sphere components.

    It is clear now how to obtain the new triangulation $\tri'$ of $\kcomp$.
    If any connected component of the intermediate triangulation $\tric$
    has torus boundary then we can take this component as the new
    triangulation $\tri'$.  Otherwise we know that $K$ must be the trivial
    knot,
    and we can take $\tri'$ to be the standard one-tetrahedron triangulation
    of the solid torus \cite{jaco03-0-efficiency} (though we
    could of course just terminate the unknot recognition algorithm
    immediately).  Either way, $\tri'$ triangulates the
    same knot complement $\kcomp$.

    Because $S$ is not the vertex link, it contains at least one
    quadrilateral and so crushing will strictly reduce the number of
    tetrahedra (recall Observation~\ref{o-crush-shrink} above).
    It is clear then that after extracting the component
    with torus boundary (or building a new one-tetrahedron solid torus),
    the number of tetrahedra in $\tri'$ will be strictly less than $n$.

    We now show how to obtain this smaller triangulation $\tri'$ in
    $O(n)$ time.  There are two procedures that we must analyse:
    (i)~the Jaco-Rubinstein crushing procedure (converting $\tri \to \tric$),
    and (ii)~extracting the connected component with torus boundary
    if one exists (converting $\tric \to \tri'$).

    \begin{itemize}
        \item \emph{The crushing procedure (converting $\tri \to \tric$):}

        It is simple to identify
        which tetrahedra of $\tri$ survive the crushing procedure (those whose
        corresponding quadrilateral coordinates in $S$ are all zero,
        as in Observation~\ref{o-crush-shrink})---the
        main challenge is to identify in $O(n)$ time how the faces of
        these surviving tetrahedra are to be glued together.

        Let $\phi$ be some face of a surviving tetrahedron $\Delta$.
        To identify the new partner face for $\phi$, we trace a path
        through adjacent tetrahedra in the original triangulation
        $\tri$ as follows.
        Whenever we enter a tetrahedron that contains quadrilaterals of $S$,
        we cross to the face on the opposite side of these quadrilaterals
        (as depicted in Figure~\ref{fig-quadcross})
        and continue through to the next adjacent tetrahedron.
        If we ever reach a tetrahedron with no quadrilaterals of $S$, then the
        resulting face $\phi'$ is the partner to which $\phi$ is glued.
        If instead we reach the boundary $\partial \tri$, then $\phi$
        becomes a boundary face of the new triangulation.
        See Figure~\ref{fig-quadcycle} for a full illustration.

        \begin{figure}[tb]
            \centering
            \subfigure[Crossing to the opposite side of the quadrilaterals]{%
                \label{fig-quadcross}%
                \quad\includegraphics[scale=0.9]{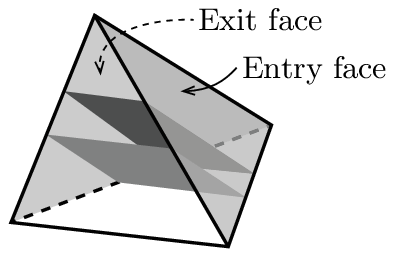}\quad}
            \qquad
            \subfigure[Moving from $\phi$ to the partner face $\phi'$]{%
                \label{fig-quadcycle}%
                \qquad\includegraphics[scale=0.9]{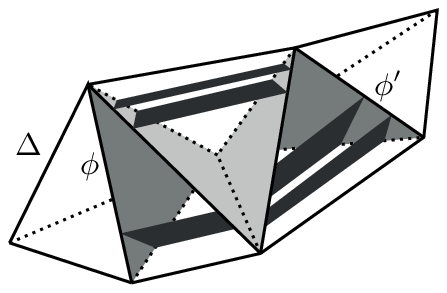}\qquad}
            \caption{Identifying the face gluings after crushing}
        \end{figure}

        To see why following paths in this way correctly identifies the face
        gluings, we must study the way in which pillows (formed on
        either side of the quadrilaterals in a tetrahedron) flatten to
        triangles, as seen earlier in Figure~\ref{fig-jrcrush}.
        Jaco and Rubinstein explain this in their original paper
        \cite{jaco03-0-efficiency}, and we do not reiterate the details here.
        It is important to note that such a path cannot cycle (since
        the starting tetrahedron $\Delta$ does not contain quadrilaterals),
        and so we must eventually reach one of the two conclusions described
        above.

        Regarding time complexity: although any individual path may be
        long, we note that each tetrahedron of $\tri$ can feature at
        most twice amongst all the paths (once for each of the two
        directions in which we might cross over the quadrilaterals).
        Therefore the total length of \emph{all} such paths is $O(n)$.
        Each step in such a path takes $O(1)$ time to compute (since all we
        need to know is which, if any, of the three quadrilateral coordinates is
        non-zero in the next tetrahedron), and so the gluings in the
        crushed triangulation $\tric$ can all be computed in total $O(n)$ time.

        \medskip

        \item
        \emph{Extracting the component with torus boundary
        (converting $\tric \to \tri'$):}

        Since each tetrahedron is adjacent to at most four
        neighbours, we can identify connected components of $\tric$
        in $O(n)$ time
        by following a depth-first search through adjacent tetrahedra
        for each component.
        This takes $O(t)$ time for each
        $t$-tetrahedron component, summing to $O(n)$ time overall.

        For each connected component, we ``build the skeleton'';
        that is, group the $4t$ vertices,
        $6t$ edges and $4t$ faces of the $t$ individual tetrahedra into
        equivalence classes that indicate how these are identified
        (or ``glued together'') in the triangulation.
        As before, we do this by following a depth-first
        search through adjacent tetrahedra for each equivalence class.
        Each search requires time proportional to the size of the
        equivalence class, summing to $O(t)$ time for each component
        and $O(n)$ time overall.

        For components with non-empty boundary, we can test
        for \emph{torus} boundary by counting equivalence classes of
        vertices, edges and faces, and computing the Euler characteristic
        $\chi = \mathrm{vertices}-\mathrm{edges}+\mathrm{faces}-
        \mathrm{tetrahedra}$.
        This is enough to distinguish between components with
        torus boundary ($\chi=0$) versus sphere boundary ($\chi=1$),
        which from earlier are the
        only possible scenarios after crushing.
        Once again this sums to $O(n)$ time overall.
        \qedhere
    \end{itemize}
\end{proof}


\subsection{Building a one-vertex triangulation} \label{s-onevtx}

In step~\ref{en-alg-simplify} of Algorithm~\ref{alg-unknot},
we convert our existing triangulation of $\kcomp$ into a
one-vertex triangulation of $\kcomp$.
There are well-known algorithms for producing one-vertex triangulations
of a 3-manifold \cite{jaco03-0-efficiency,matveev90-complexity}, though
they have not been studied from a complexity viewpoint.  We give a
method that combines Jaco-Rubinstein crushing with a tightly-controlled
subcomplex expansion technique,
and show that it runs in small polynomial time.

Before we formally state and prove this result, we introduce some
terminology.

\begin{defn}
    Let $\mathcal{E}$ be a subcomplex of a triangulation $\tri$;
    that is, a union of vertices, edges, triangles
    and/or tetrahedra of $\tri$.
    Then by the \emph{neighbourhood of $\mathcal{E}$}, denoted
    by $\nbd{\mathcal{E}}$, we mean the closure of a
    small regular neighbourhood of $\mathcal{E}$.
    By the \emph{link of $\mathcal{E}$}, denoted by $\lk{\mathcal{E}}$,
    we refer to the frontier of this neighbourhood.
\end{defn}

\begin{figure}[tb]
    \centering
    \subfigure[A subcomplex $\mathcal{E}$]{%
        \qquad\includegraphics[scale=0.45]{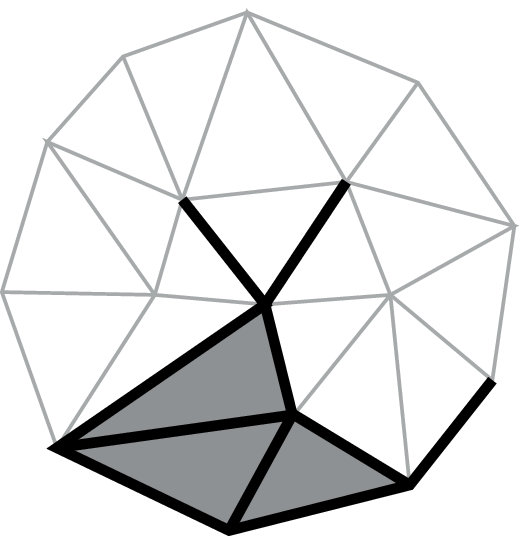}\qquad}
    \subfigure[The neighbourhood $\nbd{\mathcal{E}}$]{%
        \quad\qquad\includegraphics[scale=0.45]{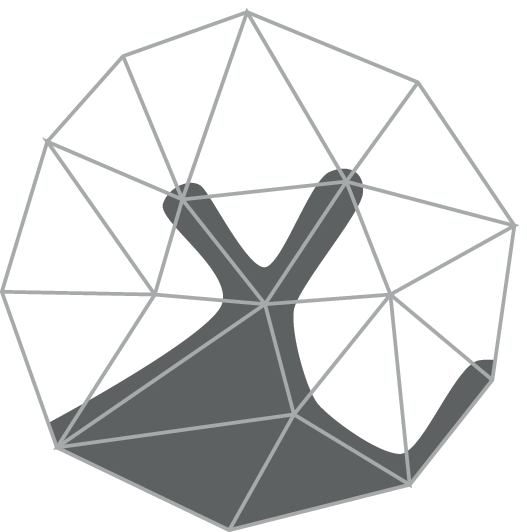}\qquad\quad}
    \subfigure[The link $\lk{\mathcal{E}}$]{%
        \qquad\includegraphics[scale=0.45]{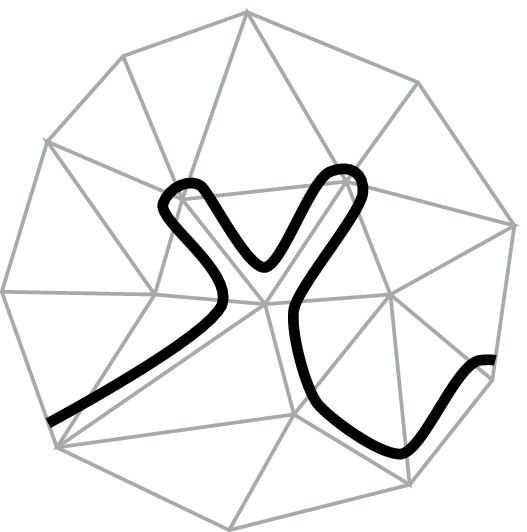}\qquad}
    \caption{Neighbourhood and link in a 2-dimensional triangulation}
    \label{fig-nbdlink}
\end{figure}

Figure~\ref{fig-nbdlink} illustrates both concepts in the 2-dimensional
setting, using a triangulation of the disc.
In our 3-dimensional setting, $\nbd{\mathcal{E}}$ is always a 3-manifold
with boundary, and $\lk{\mathcal{E}}$ (which generalises the earlier
concept of a vertex link) is always a properly embedded surface in $\tri$.

\begin{lemma} \label{l-onevtx}
    Given an $n$-tetrahedron triangulation $\tri$ of a
    knot complement $\kcomp$,
    we can construct a one-vertex triangulation
    $\tri'$ of $\kcomp$ with at most $n$ tetrahedra in $O(n^3)$ time.
\end{lemma}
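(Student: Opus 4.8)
The plan is to drive the number of vertices down to one, a single vertex at a time, letting Lemma~\ref{l-crush} do the heavy lifting while a small local modification of the triangulation sets up each step. Since a triangulation of $\kcomp$ with $n$ tetrahedra has at most $4n$ vertices, it suffices to describe one \emph{vertex-reducing round}: given a triangulation of $\kcomp$ with $m\le n$ tetrahedra and more than one vertex, produce a triangulation of $\kcomp$ with one fewer vertex and at most $m$ tetrahedra, in $O(m)$ time. Running this round at most $4n-1$ times then yields a one-vertex triangulation of $\kcomp$ --- by induction the tetrahedron count never exceeds $n$ --- in $O(n^2)$ time overall, which is in particular within the claimed $O(n^3)$.

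For a single round, connectedness of $\kcomp$ guarantees that the 1-skeleton of the current triangulation $\tri$ contains an edge $e$ with two \emph{distinct} endpoints $u\neq v$. The operation we want is the edge collapse merging $u$ into $v$, and this is exactly the effect of crushing the frontier $\lk{e}=\partial\,\nbd{e}$ of a small regular neighbourhood of $e$: the neighbourhood $\nbd{e}$ is a 3-ball (meeting $\partial\tri$ in a disc if $e$ touches the boundary), so $\lk{e}$ is a sphere or a disc, and by Observation~\ref{o-crush-change} --- via the same case analysis as in the proof of Lemma~\ref{l-crush} --- crushing it deletes $\nbd{e}$ and returns $\kcomp$ up to discarding 3-sphere components, while identifying $u$ with $v$ in the combinatorics.

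The obstruction is that $\lk{e}$ need not be an admissible normal surface in $\tri$: if $e$ occurs among the six edges of some tetrahedron $\Delta$ more than once in a non-opposite position, then $\lk{e}\cap\Delta$ would need two distinct quadrilateral types and would violate the quadrilateral constraints. Before crushing I would therefore perform a tightly-controlled combinatorial expansion: subdivide each such offending tetrahedron (and, where a shared face forces it, the corresponding face of a neighbour), choosing a subdivision \emph{adapted to $e$} so that in the resulting triangulation $\hat{\tri}$ of $\kcomp$ the subdivided copy of $e$ runs through a clean string of tetrahedra, $\lk{e}$ becomes a genuine connected normal surface $S_e$ carrying at least one quadrilateral, and the subdivided copy of $e$ still runs through all but $O(1)$ of the pieces created inside each subdivided $\Delta$. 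Then $S_e$ is a connected normal surface of positive Euler characteristic in the knot-complement triangulation $\hat{\tri}$ that is not a vertex link, so Lemma~\ref{l-crush} applies verbatim: crushing $S_e$ and extracting the component with torus boundary --- or falling back to the standard one-tetrahedron solid torus, which is already one-vertex --- yields a triangulation $\tri'$ of $\kcomp$ in $O(|\hat{\tri}|)=O(m)$ time. By the previous paragraph $\tri'$ has exactly one fewer vertex than $\tri$; by Observation~\ref{o-crush-shrink} the crush removes precisely the tetrahedra of $\hat{\tri}$ that the subdivided copy of $e$ passes through, and the ``adapted'' choice of subdivision makes this number at least the number of tetrahedra the expansion added, so $\tri'$ has at most $m$ tetrahedra. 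Locating $e$ and the offending tetrahedra, expanding, building $S_e$, crushing, and rebuilding the skeleton for the next round are each $O(m)$ (crushing by the amortised argument in the proof of Lemma~\ref{l-crush}), so a round runs in $O(m)$ time.

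The hard part is the expansion: one must exhibit a subdivision that is simultaneously topology-preserving, strong enough to make $\lk{e}$ honestly normal with a quadrilateral, and cheap enough that crushing the edge link afterwards yields a net non-increase in tetrahedra --- a naive choice such as a full barycentric subdivision of the star of $e$ fails the last requirement, since it multiplies the local tetrahedron count by $24$ while the edge link runs through only a constant fraction of the new pieces. A secondary point needing care, exactly parallel to the face-gluing analysis inside the proof of Lemma~\ref{l-crush}, is verifying that crushing $\lk{e}$ --- as opposed to a vertex link, which does nothing --- genuinely identifies $u$ with $v$ and leaves the rest of the vertex set untouched, which requires tracing how the pillows on either side of the quadrilaterals of $S_e$ flatten.
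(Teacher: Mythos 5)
Your proposal and the paper's proof share the starting intuition---locate an edge $e$ joining two distinct vertices and try to crush an edge link---but the technical device you reach for is different from the paper's, and you yourself flag the step that does the real work as unresolved. That unresolved step is indeed where the argument breaks.

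You propose to \emph{subdivide} offending tetrahedra (those in which $e$ appears more than once in a non-opposite position) so that $\lk{e}$ becomes an honest normal surface, and you explicitly require the subdivision to be ``adapted to $e$'' so that after crushing you have a net non-increase in tetrahedra. You then observe, correctly, that a barycentric-style subdivision fails this requirement and that it is unclear a cheap enough adapted subdivision exists. This is not a secondary detail: it is the entire content of the lemma. Without exhibiting such a subdivision (and proving it is topology-preserving, that $\lk{e}$ is normal afterwards, and that the tetrahedron bookkeeping works out), the round you describe is not shown to exist. The paper avoids this difficulty entirely by working \emph{inside the fixed triangulation}: rather than subdividing, it enlarges the edge $e$ to a subcomplex $\mathcal{E}$ by repeatedly absorbing any triangle with two or three edges already in $\mathcal{E}$ (and any tetrahedron with all four faces in $\mathcal{E}$), stopping when no further absorption is possible. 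The link $\lk{\mathcal{E}}$ is then normal by a direct construction, and the proof shows that some connected component of it is a non-vertex-linking disc. No subdivision is ever performed, so the ``cheap enough'' question never arises.

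Two further issues. First, you track the number of \emph{vertices}, claiming each round removes exactly one. But what Lemma~\ref{l-crush} actually controls is the \emph{tetrahedron} count, and crushing a single disc component of an edge or subcomplex link has no clean a~priori effect on vertex identifications; you acknowledge this yourself in the final sentence. The paper sidesteps this as well: it only tracks the strictly decreasing tetrahedron count, allowing at most $n$ iterations and accepting the weaker $O(n^3)$ bound rather than proving $O(n^2)$. Second, the paper is careful about \emph{which} edge $e$ to pick (preferring an edge lying entirely in $\partial\tri$, or failing that one joining the boundary vertex to an interior vertex); this choice is what makes Claim~B of the paper's proof (that $\nbd{\mathcal{E}}$ is a punctured ball meeting $\partial\tri$ in a non-empty connected region, hence that $\lk{\mathcal{E}}$ contains a non-vertex-linking disc) go through. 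Your proposal does not address edge selection, and the argument that ``crushing $\lk{e}$ identifies $u$ with $v$ and leaves the rest of the vertex set untouched'' is asserted but not established.

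In short: your approach has the right shape but is missing its central construction, and the paper's proof is worth reading precisely because it replaces the subdivision you could not build with a subcomplex expansion that makes the whole issue disappear.
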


\begin{proof}
    First we count the number of vertices of $\tri$.
    We can do this in $O(n)$ time by partitioning
    the $4n$ individual tetrahedron
    vertices into equivalence classes that indicate how they are
    identified in the overall triangulation, using a depth-first
    search through the gluings between adjacent tetrahedra.

    If $\tri$ has only one vertex, then we are finished.
    Therefore we assume from here on that $\tri$ has more than one vertex.
    It follows immediately that we have $n \geq 2$ tetrahedra,
    since by a simple enumeration of all one-tetrahedron
    triangulations, the only one-tetrahedron knot complement is the standard
    one-tetrahedron, one-vertex triangulation of the solid torus
    \cite{jaco03-0-efficiency}.

    Our next task is to locate an edge $e$ that joins two distinct vertices
    of $\tri$.
    If $\tri$ contains more than one vertex
    on the boundary then we choose $e$ to lie entirely in $\partial \tri$
    (as opposed to cutting through the interior of $\tri$);
    otherwise we choose $e$ to join the single boundary vertex with
    some internal vertex (as opposed to joining two internal vertices).
    We can find such an edge in $O(n)$ time simply by iterating
    through all $6n$ individual tetrahedron edges, and observing how
    their endpoints sit within the partition of vertices that we made before.

    Our plan is to convert this edge $e$ into a normal disc that
    is not a vertex link, and to do this in $O(n^2)$ time.
    Given such a normal disc, Lemma~\ref{l-crush} shows that we can crush it
    in $O(n)$ time to obtain a new triangulation of $\kcomp$ with
    strictly fewer tetrahedra.  We repeat this process until we arrive
    at a one-vertex triangulation; since the number of tetrahedra
    decreases at every stage, the process must terminate after at most
    $n$ iterations, giving a running time of $O(n^3)$ overall.

    The remainder of this proof shows how, given our edge $e$,
    we can build a non-vertex-linking normal disc in $O(n^2)$ time.

    From our choice of $e$, the link $\lk{e}$ is already a disc;
    however, it might not be normal.
    For instance, if $e$ appears twice around some triangle of $\tri$,
    then the link of $e$ will meet this triangle in a
    ``bent'' arc (shown in Figure~\ref{fig-bentarc}), which a normal
    surface cannot contain.

    \begin{figure}[tb]
        \centering
        \includegraphics[scale=0.9]{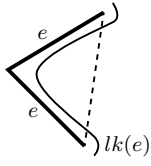}
        \caption{A non-normal edge linking surface}
        \label{fig-bentarc}
    \end{figure}

    Our strategy then is to expand $e$ to a subcomplex $\mathcal{E}$
    of $\tri$ whose link $\lk{\mathcal{E}}$ is a
    \emph{normal} surface, and to show that some component of this
    link is the disc that we seek.  To do this, we initialise
    $\mathcal{E}$ to the single edge $e$ and repeat the following
    expansion steps for as long as possible:
    \begin{itemize}
        \item If any triangle has either two or all three of its
        edges in $\mathcal{E}$, we expand $\mathcal{E}$ to include
        the entire triangle (as illustrated in Figure~\ref{fig-expand});

        \item If any tetrahedron has all four of its triangular
        faces in $\mathcal{E}$,
        we expand $\mathcal{E}$ to include the entire tetrahedron.
    \end{itemize}

    \begin{figure}[tb]
        \centering
        \includegraphics[scale=0.9]{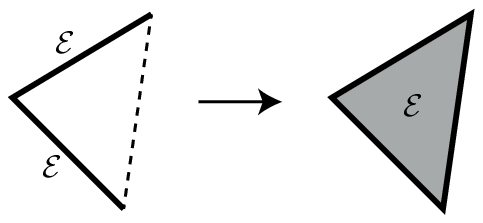}
        \qquad\qquad\qquad
        \includegraphics[scale=0.9]{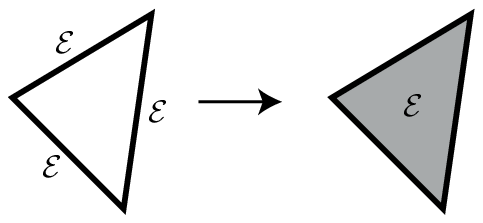}
        \caption{Expanding the subcomplex $\mathcal{E}$}
        \label{fig-expand}
    \end{figure}

    We now present a series of claims that together show that some
    component of $\lk{\mathcal{E}}$ is a non-vertex-linking normal disc.
    After this, we conclude the proof by showing how this disc is
    constructed in $O(n^2)$ time.

    \begin{figure}[tb]
        \centering
        \includegraphics[scale=1.0]{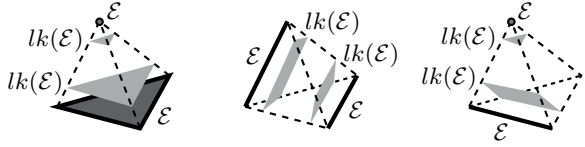}
        \caption{Building the link $\lk{\mathcal{E}}$ as a normal surface}
        \label{fig-normalfrontier}
    \end{figure}

    \textbf{Claim A:} \emph{Once the expansion is finished,
    the link $\lk{\mathcal{E}}$ is normal.}

    To show this, we explicitly construct $\lk{\mathcal{E}}$
    by inserting the following triangles and quadrilaterals into each
    tetrahedron $\Delta$ of $\tri$
    (see Figure~\ref{fig-normalfrontier} for illustrations):

    \begin{itemize}
        \item If $\mathcal{E}$ contains the
        entire tetrahedron $\Delta$, then we do not place any
        triangles or quadrilaterals in $\Delta$.

        \item Otherwise, $\mathcal{E}$ contains at most one triangular
        face of $\Delta$ (since two or more faces would cause the expansion
        process to consume $\Delta$ completely).
        If $\mathcal{E}$ does contain a face of $\Delta$,
        then we place a normal triangle beside this face.

        \item If $\mathcal{E}$ does not contain any faces of $\Delta$,
        it might still contain either one edge of $\Delta$, or two
        opposite edges of $\Delta$ (any more would again cause further
        expansion).  If $\mathcal{E}$ contains such edge(s), then we
        place a normal quadrilateral beside each edge.

        \item If $\mathcal{E}$ contains any additional vertices of
        $\Delta$ that are not yet accounted for, we place a triangle
        next to each such vertex.
    \end{itemize}

    When connected together, these triangles and quadrilaterals form the link
    $\lk{\mathcal{E}}$, thereby establishing claim~A.

    \textbf{Claim B:}
    \emph{The neighbourhood $\nbd{\mathcal{E}}$ is a 3-ball, possibly
    with punctures, and $\nbd{\mathcal{E}}$ meets the boundary $\partial \tri$
    in a non-empty connected region.}

    We prove this by induction, showing that claim~B holds true at each
    stage of the expansion process.
    \begin{itemize}
        \item At the beginning of the expansion process we have
        $\mathcal{E}=e$.  Here our claim is true because we chose $e$
        to join two distinct vertices, and because we chose $e$ to meet
        $\partial \tri$ in either (i)~just one endpoint, or
        (ii)~the entire edge.
        Either way, $\nbd{e}$ is just a 3-ball that meets
        $\partial \tri$ in a disc.

        \item Suppose some triangle of $\tri$
        has two of its edges in $\mathcal{E}$.
        The resulting expansion step ``grows'' both $\mathcal{E}$
        and $\nbd{\mathcal{E}}$ to
        include the entire triangle (see Figure~\ref{fig-expand-face2}).
        This does not change the topology of $\nbd{\mathcal{E}}$,
        which remains a 3-ball possibly with punctures.

        If the third edge of the triangle is internal to $\tri$
        then the intersection $\nbd{\mathcal{E}} \cap \partial \tri$
        does not change.  If the third edge lies in
        the boundary of $\tri$
        then the intersection $\nbd{\mathcal{E}} \cap \partial \tri$
        expands but remains connected (we effectively attach
        a strip surrounding this third edge).

        \begin{figure}[tb]
            \centering
            \subfigure[Growing $\mathcal{E}$ across a
                triangle\label{fig-expand-face2}]%
                {\includegraphics[scale=0.7]{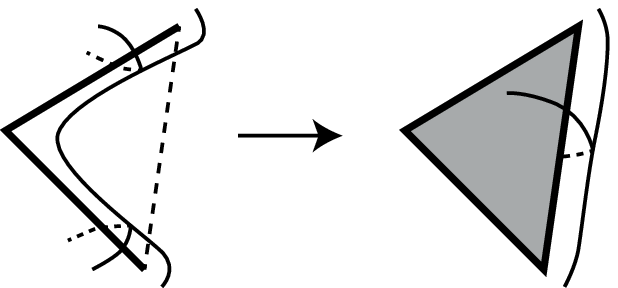}}
            \qquad\qquad
            \subfigure[Expanding $\mathcal{E}$ into a
                triangle\label{fig-expand-face3}]%
                {\includegraphics[scale=0.7]{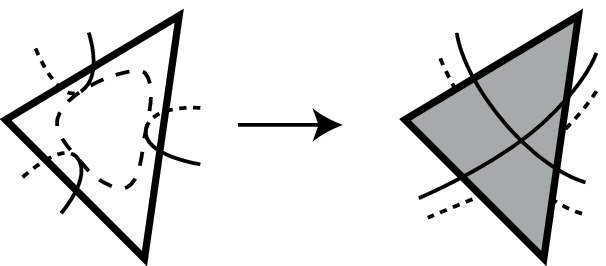}}
            \caption{How expansion affects the neighbourhood
                $\nbd{\mathcal{E}}$}
            \label{fig-expand-face}
        \end{figure}

        \item Suppose some triangle of $\tri$ has all three of its edges
        in $\mathcal{E}$.
        Here the expansion attaches a thickened disc to
        $\nbd{\mathcal{E}}$ (see Figure~\ref{fig-expand-face3}),
        which converts $\nbd{\mathcal{E}}$ from
        a 3-ball with $k$ punctures to a 3-ball with $k+1$ punctures.

        As before, if the triangle is internal to $\tri$
        then the intersection $\nbd{\mathcal{E}} \cap \partial \tri$
        does not change, and
        if the triangle lies in the boundary of $\tri$
        then the intersection expands but remains connected.

        \item Finally, if a tetrahedron has all four of its
        faces in $\mathcal{E}$,
        the resulting expansion simply ``closes off'' one of the punctures
        in $\nbd{\mathcal{E}}$ by filling it with a ball.
    \end{itemize}

    Inducting over the step-by-step construction of $\mathcal{E}$
    now establishes claim~B.

    \textbf{Claim C:}
    \emph{Some component of the link $\lk{\mathcal{E}}$ is a
    disc that is not a vertex link.}

    Let $I = \nbd{\mathcal{E}} \cap \partial \tri$.
    By claim~B, $\nbd{\mathcal{E}}$ is bounded by spheres,
    and so $I$ (which is a non-empty, connected subset of this boundary)
    must be a sphere with zero or more punctures.
    Because $\partial \tri$ is a torus, $I \neq \partial \tri$;
    therefore the surface $I$ has boundary, and must be a sphere with
    at least one puncture.

    Let $S$ be the particular sphere bounding $\nbd{\mathcal{E}}$ that
    contains $I$ as a subset.
    When combined, the link $\lk{\mathcal{E}}$ and $I$ together form the
    entire boundary of $\nbd{\mathcal{E}}$;
    therefore $\lk{\mathcal{E}}$ is the disjoint union of (i)~zero or more
    spheres (excluding $S$), and (ii)~one or more discs (which
    ``plug the punctures'' left by $I$ on the sphere $S$).
    If all of these latter discs are vertex links, then all of the punctures
    in $I$ must also be filled with discs on $\partial \tri$
    (see Figure~\ref{fig-linkball}); however,
    this would make $\partial \tri$ a sphere (not a torus), and so
    at least one of these discs must not be a vertex link.
    This establishes claim~C.

    \begin{figure}[tb]
        \centering
        \includegraphics[scale=0.9]{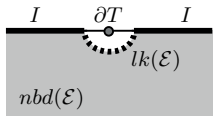}
        \caption{A vertex linking component of $\lk{\mathcal{E}}$}
        \label{fig-linkball}
    \end{figure}

    \textbf{Conclusion:}
    By claims~A and C, some component of $\lk{\mathcal{E}}$ is the
    non-vertex-linking normal disc that we seek.  All that remains is to show
    that we can construct this disc in $O(n^2)$ time.

    To build the subcomplex $\mathcal{E}$ from the original edge $e$
    takes $O(n^2)$ time:
    each expansion step can be tested and performed in $O(n)$
    time, and since $\tri$ contains $O(n)$ edges, triangles and tetrahedra
    there are $O(n)$ expansion steps in total.

    Building the normal surface $\lk{\mathcal{E}}$ now takes $O(n)$ time,
    since (from Claim~A) we simply insert $O(1)$ triangles and/or
    quadrilaterals in each tetrahedron.
    Moreover, this normal surface contains $O(n)$ triangles and
    quadrilaterals in total, and so in $O(n)$ time we can
    extract the connected components of $\lk{\mathcal{E}}$,
    compute their Euler characteristics, and identify one such component
    that is a disc but not a vertex link.
    This concludes the proof of Lemma~\ref{l-onevtx}.
\end{proof}


\subsection{Searching for positive Euler characteristic} \label{s-search}


We now give the details for step~\ref{en-alg-search} of
Algorithm~\ref{alg-unknot},
in which we search through our triangulation for a
connected normal surface which is not the vertex link
and has positive Euler characteristic.\footnote{%
    Recall from the proof of Lemma~\ref{l-crush} that,
    within a knot complement, having positive Euler characteristic encodes the
    fact that a connected surface is either a sphere or a disc.}
This search sits at the heart of the algorithm,
and is its major bottleneck.

\begin{assumptions}
    Throughout this section,
    $\tri$ is a one-vertex, $n$-tetrahedron
    triangulation of a knot complement $\kcomp$.
\end{assumptions}

In principle, our search treats the Euler characteristic
$\chi\co \R^{7n} \to \R$ as a linear
objective function, and maximises $\chi$ over the space of
admissible points in $\R^{7n}$
using branch-and-bound techniques.
If the surface we are seeking exists then the maximum $\chi$ will be
unbounded (because the matching equations, quadrilateral constraints and
$\chi$ are all homogeneous, and so admissible points with $\chi > 0$
can be scaled arbitrarily).
If the surface we are seeking does not exist then
the maximum $\chi$ will be zero (because $\mathbf{0} \in \R^{7n}$ is always
admissible).
In practice, since we only need to distinguish between a zero or
positive maximum, we formulate our search purely in terms of branching and
feasibility tests.

We begin this section with Lemmata~\ref{l-conn} and \ref{l-nonvtxlink},
which set up constraints for our search:
Lemma~\ref{l-conn} shows we can ensure our surface is connected
by setting as many coordinates as possible to zero,
and Lemma~\ref{l-nonvtxlink} shows we can ensure our surface is not the
vertex link by setting at least one triangle coordinate to zero.
We follow with a description of
Algorithm~\ref{alg-search-broad}, which lays out the
structure of the search, and then we discuss the details of the
branching scheme, prove correctness, and analyse the running time.

Recall from the preliminaries section that, if $S$ is a normal surface
in $\tri$,
then $\mathbf{v}(S)$ denotes the vector representation of $S$ in $\R^{7n}$.
Recall also that a point $\mathbf{x} \in \R^{7n}$ is \emph{admissible}
if it satisfies $\mathbf{x} \geq 0$, the matching equations
$A\mathbf{x}=0$, and the quadrilateral constraints.

\begin{lemma} \label{l-conn}
    Let $S$ be a normal surface in $\tri$ with positive Euler
    characteristic, and suppose there is no normal surface $S'$ in $\tri$
    with positive Euler characteristic with the following properties:
    (i)~whenever the $i$th coordinate of $\mathbf{v}(S)$ is zero then the
    $i$th coordinate of $\mathbf{v}(S')$ is likewise zero; and
    (ii)~there is some $i$ for which the $i$th coordinate of $\mathbf{v}(S)$
    is non-zero but the $i$th coordinate of $\mathbf{v}(S')$ is zero.

    Then the smallest positive rational multiple of $\mathbf{v}(S)$ whose
    coordinates are all integers represents a
    connected normal surface with positive Euler characteristic.
\end{lemma}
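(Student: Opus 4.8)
The plan is to work with $\mathbf{w}$, the smallest positive rational multiple of $\mathbf{v}(S)$ with all-integer coordinates, and to show in turn that $\mathbf{w}$ represents a normal surface, that this surface has positive Euler characteristic, and finally that it is connected. The first two facts are essentially automatic: since the matching equations $A\mathbf{x}=0$, the nonnegativity constraint, the quadrilateral constraints, and the Euler characteristic function $\chi$ are all homogeneous (the quadrilateral constraints depend only on which coordinates are zero, which scaling does not affect), $\mathbf{w}$ is admissible and integral whenever $\mathbf{v}(S)$ is a positive multiple of it, so by Haken's criterion $\mathbf{w}=\mathbf{v}(T)$ for some normal surface $T$, and $\chi(\mathbf{w})$ is a positive multiple of $\chi(\mathbf{v}(S))>0$. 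Note that $T$ has exactly the same set of zero coordinates as $S$.

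The real work is connectedness. First I would observe that if $T$ is disconnected, write $T = T_1 \sqcup T_2$ with $T_1$ nonempty; then $\mathbf{v}(T) = \mathbf{v}(T_1) + \mathbf{v}(T_2)$ with both summands admissible and nonnegative, and the support (set of nonzero coordinates) of each $T_i$ is contained in that of $T$, hence in that of $S$. Since $\chi$ is additive over disjoint union, $\chi(\mathbf{v}(T)) = \chi(\mathbf{v}(T_1)) + \chi(\mathbf{v}(T_2))$, so at least one of the two components, say $T_1$, has $\chi(\mathbf{v}(T_1)) > 0$. Now $T_1$ is a normal surface in $\tri$ with positive Euler characteristic; it satisfies property~(i) in the hypothesis because its support is contained in the support of $T$, which equals the support of $S$; and it satisfies property~(ii) because $T_2$ is nonempty, so $T_1$ omits at least one coordinate that $S$ (equivalently $T$) uses. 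This directly contradicts the hypothesis that no such $S'$ exists. Hence $T$ is connected.

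The step I expect to require the most care is the bookkeeping around supports and the claim that scaling to clear denominators does not enlarge or shrink the zero set — i.e., that $\mathbf{w}$ and $\mathbf{v}(S)$ have exactly the same support, so that property~(i) transfers cleanly between $S$ and $T$. This is immediate once one notes that a positive rational scalar kills a coordinate if and only if it was already zero, but it must be stated, since the entire argument hinges on the surface $T_1$ inheriting property~(i) with respect to $S$ rather than merely with respect to $T$. A secondary point worth spelling out is that $\mathbf{v}(T_1)$ need not itself be a scalar multiple of $\mathbf{v}(S)$, and indeed it need not be — we only need it to be \emph{some} normal surface violating the stated minimality of $S$, which is exactly what the hypothesis forbids.
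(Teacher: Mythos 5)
Your outline of the easy parts is correct: scaling by a positive rational preserves admissibility and the sign of $\chi$, the resulting integer vector represents a normal surface $T$ with $\chi>0$, and $T$ has the same set of zero coordinates as $S$. The gap is in the connectedness step. You write that, since $T_2$ is nonempty, ``$T_1$ omits at least one coordinate that $S$ (equivalently $T$) uses.'' This does not follow. From $\mathbf{v}(T) = \mathbf{v}(T_1) + \mathbf{v}(T_2)$ with $T_2$ nonempty you can only conclude that some coordinate of $\mathbf{v}(T_2)$ is positive; that same coordinate may perfectly well be positive in $\mathbf{v}(T_1)$ as well. In general the two pieces of a disconnected normal surface can both have full support relative to the whole. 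So you cannot conclude that $T_1$ has strictly smaller support than $S$, and hence cannot invoke property~(ii) of the hypothesis to derive a contradiction. (Note that $T_1 = T_2$ \emph{is} ruled out by minimality of the scaling, but two distinct pieces with identical support are not, without further argument.)

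What is needed to close the gap --- and what the paper proves first --- is that the hypothesis forces $\mathbf{v}(S)$ to lie on an \emph{extreme ray} of the cone $\mathcal{C} = \{\mathbf{x}\geq 0,\ A\mathbf{x}=0\}$. The paper does this by writing $\mathbf{v}(S)$ as a positive combination of integer extreme ray representatives $\mathbf{e}_1,\dots,\mathbf{e}_k$ with $k\geq 2$, observing that each $\mathbf{e}_i$ has support contained in $\mathrm{supp}(\mathbf{v}(S))$ and hence is admissible, picking $\mathbf{e}_p$ with $\chi(\mathbf{e}_p)>0$, and then using the fact that distinct extreme rays of a cone cut out by $x_i\geq 0$ have \emph{incomparable} zero sets to produce a coordinate that is nonzero in $\mathbf{v}(S)$ but zero in $\mathbf{e}_p$; that makes $S_p$ the forbidden surface $S'$. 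Once $\mathbf{v}(S)$ (hence $\mathbf{w}$) lies on an extreme ray, any decomposition $\mathbf{w}=\mathbf{v}(T_1)+\mathbf{v}(T_2)$ into nonnegative vectors of $\mathcal{C}$ forces both summands to be proportional to $\mathbf{w}$, and minimality of the integer scaling gives the contradiction immediately. Without this extreme-ray step, the claim that $T_1$ omits a coordinate is unjustified, and the proof does not go through.
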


\begin{proof}
    Let $\mathcal{C}$ denote the polyhedral cone in $\R^{7n}$ defined by
    $\mathbf{x} \geq 0$ and $A\mathbf{x}=0$.  We first show that the vector
    $\mathbf{v}(S)$ lies on an extreme ray of $\mathcal{C}$ (in the
    language of normal surface theory, such an $S$ is called a
    \emph{vertex normal surface}).

    Suppose that $\mathbf{v}(S)$ does not lie on an extreme ray of
    $\mathcal{C}$.
    Since $\mathbf{v}(S)$ is admissible we have
    $\mathbf{v}(S) \in \mathcal{C}$, and so we can express
    $\mathbf{v}(S)$ as a finite non-negative linear combination
    $\mathbf{v}(S) = \lambda_1 \mathbf{e}_1 + \ldots + \lambda_k \mathbf{e}_k$,
    where each $\mathbf{e}_i$ lies on a different extreme ray of $\mathcal{C}$,
    each $\lambda_i > 0$, and there are $k \geq 2$ terms.
    Because $\mathcal{C}$ is a rational cone, we can take each
    $\mathbf{e}_i$ to be an \emph{integer} vector.  Since
    $\mathcal{C}$ lies in the non-negative orthant, it follows that
    whenever the $i$th coordinate of $\mathbf{v}(S)$ is
    zero then the $i$th coordinate of each $\mathbf{e}_i$ must
    likewise be zero.
    In particular, since $\mathbf{v}(S)$
    satisfies the quadrilateral constraints, each
    $\mathbf{e}_i$ also satisfies the quadrilateral constraints,
    and so each $\mathbf{e}_i = \mathbf{v}(S_i)$ for some normal surface $S_i$.

    Recall that the Euler characteristic $\chi$ is a
    homogeneous linear function on $\R^{7n}$.
    Since $\chi(\mathbf{v}(S))>0$,
    we have $\chi(\mathbf{e}_p)>0$ for some summand $\mathbf{e}_p$;
    i.e., the corresponding surface $S_p$ has positive Euler characteristic.
    Take any other summand $\mathbf{e}_q$ where $q \neq p$.
    Since the inequalities that define
    $\mathcal{C}$ are all of the form $x_i \geq 0$, the extreme rays of
    $\mathcal{C}$ are defined by which coordinates are zero and which
    are non-zero; in particular, there must be some coordinate which is
    non-zero in $\mathbf{e}_q$ but zero in $\mathbf{e}_p$.
    This coordinate is therefore non-zero in $\mathbf{v}(S)$ but zero in
    $\mathbf{v}(S_p)$, and so the surface $S_p$
    satisfies all of the properties of $S'$ in the lemma statement,
    yielding a contradiction.

    Therefore $\mathbf{v}(S)$ lies on an extreme ray of $\mathcal{C}$.
    Let $\mathbf{u}$ be the smallest positive rational
    multiple of $\mathbf{v}(S)$ with integer coordinates.  It is clear that,
    like $\mathbf{v}(S)$,
    $\mathbf{u}$ is also admissible with $\chi(\mathbf{u})>0$;
    that is, $\mathbf{u}$ represents some normal surface $U$ with
    positive Euler characteristic.
    All that remains to show is that $U$ is connected.

    If $U$ is not connected, we can express it as the disjoint union of
    non-empty normal surfaces $U = X \cup Y$, whereupon
    $\mathbf{u}=\mathbf{v}(X)+\mathbf{v}(Y)$.  Since $\mathbf{u}$ lies
    on an extreme ray of $\mathcal{C}$, it must be that both $\mathbf{v}(X)$
    and $\mathbf{v}(Y)$ are \emph{smaller} integer multiples of
    $\mathbf{u}$, in contradiction with the definition of $\mathbf{u}$.
\end{proof}

\begin{lemma} \label{l-nonvtxlink}
    A connected normal surface $S$ in $\tri$ is the vertex link if and
    only if the vector $\mathbf{v}(S) \in \R^{7n}$ has
    none of its $4n$ triangle coordinates equal to zero.
\end{lemma}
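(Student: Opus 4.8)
The plan is to prove both implications from the explicit description of the vertex link, together with Haken's realisation criterion \cite{haken61-knot} and the fact that the vector $\mathbf{v}(\cdot)$ determines a normal surface uniquely (up to normal isotopy).

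The forward direction is immediate. Since $\tri$ is a one-vertex triangulation, its vertex link is a normal surface consisting of exactly one triangle of each of the $4n$ types and no quadrilaterals (as recalled in the preliminaries). Hence if $S$ is the vertex link, every one of the $4n$ triangle coordinates of $\mathbf{v}(S)$ equals $1$, and in particular none is zero.

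For the converse, suppose $\mathbf{v}(S)$ has all $4n$ triangle coordinates nonzero; being nonnegative integers, they are in fact $\geq 1$. Let $L$ denote the vertex link, so $\mathbf{v}(L)$ has every triangle coordinate equal to $1$ and every quadrilateral coordinate equal to $0$, and set $\mathbf{w} = \mathbf{v}(S) - \mathbf{v}(L) \in \Z^{7n}$. First I would check that $\mathbf{w}$ is admissible: it is nonnegative, since each triangle coordinate of $\mathbf{w}$ is a triangle coordinate of $S$ minus $1 \geq 0$ while each quadrilateral coordinate of $\mathbf{w}$ simply equals the corresponding coordinate of $\mathbf{v}(S) \geq 0$; it satisfies the matching equations because $A\mathbf{w} = A\mathbf{v}(S) - A\mathbf{v}(L) = 0$; and it satisfies the quadrilateral constraints because its quadrilateral coordinates coincide with those of $\mathbf{v}(S)$. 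By Haken's criterion there is therefore a normal surface $S'$ with $\mathbf{v}(S') = \mathbf{w}$ (possibly empty). The next step is to read the identity $\mathbf{v}(S) = \mathbf{v}(S') + \mathbf{v}(L)$ topologically: I would observe that a copy $L_\varepsilon$ of the vertex link, placed in a sufficiently small regular neighbourhood of the single vertex (in each tetrahedron, one vertex-truncating normal triangle of each type, pushed closer to the vertex than all of the finitely many normal discs of a fixed representative of $S'$), is disjoint from $S'$; then $S' \sqcup L_\varepsilon$ is an embedded normal surface with vector $\mathbf{v}(S') + \mathbf{v}(L) = \mathbf{v}(S)$, so by uniqueness $S$ is normally isotopic to $S' \sqcup L$. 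Since $S$ is connected and $L$ is nonempty, this forces $S' = \emptyset$, hence $\mathbf{v}(S) = \mathbf{v}(L)$ and $S$ is the vertex link.

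The main obstacle is exactly this last step — passing from the combinatorial identity $\mathbf{v}(S) = \mathbf{v}(S') + \mathbf{v}(L)$ to the topological statement $S = S' \sqcup L$. For a general pair of compatible normal surfaces the vector sum corresponds to a Haken (cut-and-paste) sum rather than a disjoint union, so one genuinely needs that the summand here is the vertex link, which can be isotoped off any other normal surface; the disjointness argument above is the honest way to see this, though one could alternatively invoke the standard fact that summing with a vertex link is a disjoint union. Everything else — the admissibility check for $\mathbf{w}$ and the bookkeeping of triangle versus quadrilateral coordinates — is routine.
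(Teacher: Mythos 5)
Your proof is correct and rests on the same idea as the paper's: a connected normal surface with all $4n$ triangle coordinates positive must decompose as a disjoint union of the vertex link with some (possibly empty) remainder, which forces the remainder to be empty. The paper asserts this decomposition in a single sentence as ``immediate''; you supply the honest justification by subtracting $\mathbf{v}(L)$, checking admissibility of the difference, realising the vector identity as a genuine disjoint union via the normal isotopy pushing $L$ inside $S'$, and invoking uniqueness of normal surfaces given their coordinate vectors.
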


\begin{proof}
    This is an immediate consequence of the fact that $\tri$ is
    one-vertex.  The vertex link itself has all triangle coordinates
    equal to one, and any other surface whose triangle coordinates are all
    positive must be the disconnected union of the vertex link with one or
    more other surfaces.
\end{proof}

\begin{algorithm} \label{alg-search-broad}
    To find a connected normal surface $S$ which is not the vertex link
    and which has positive Euler characteristic:
    \begin{enumerate}
        \item \label{en-broad-adm}
        Search for an admissible point
        $\mathbf{p} \in \R^{7n}$ for which $\chi(\mathbf{p}) \geq 1$,
        and for which at least one of the $4n$ triangle coordinates is zero.
        If no such $\mathbf{p}$ exists then the surface $S$ does not
        exist either.

        \item \label{en-broad-shrink}
        Construct a system $\mathcal{L}$ of linear constraints,
        initially defined by $\mathbf{x} \geq 0$,
        $\chi(\mathbf{x}) \geq 1$, and the matching equations
        $A\mathbf{x} = 0$.  Using the point $\mathbf{p}$ located in the
        previous step:
        \begin{enumerate}
            \item For each coordinate $i=1,\ldots,7n$ with $p_i=0$,
            add the additional constraint $x_i=0$ to the system $\mathcal{L}$.

            \item Then, for each coordinate $i=1,\ldots,7n$
            with $p_i \neq 0$,
            add the constraint $x_i=0$ to $\mathcal{L}$
            and test whether $\mathcal{L}$ is feasible
            (i.e., has any solutions).  If so, keep the
            constraint $x_i=0$ in $\mathcal{L}$; if not, remove it again.
        \end{enumerate}

        \item \label{en-broad-int}
        Let $\mathbf{q}$ be a solution to the final system $\mathcal{L}$,
        and let $\lambda\mathbf{q}$ be the smallest positive rational multiple
        of $\mathbf{q}$ whose coordinates are all integers.  Then
        $\lambda\mathbf{q}$ is an integer vector in $\R^{7n}$
        that represents the desired surface $S$.
    \end{enumerate}
\end{algorithm}

In summary: step~\ref{en-broad-adm} contains the bulk of the work in
locating a solution if one exists, and requires worst-case exponential time.
If a solution \emph{is} found then
steps~\ref{en-broad-shrink} and~\ref{en-broad-int} refine it in
polynomial time by setting as
many additional coordinates to zero as possible, which ensures that
the final normal surface is connected (as in Lemma~\ref{l-conn})
without violating the quadrilateral constraints.

We analyse this algorithm shortly (see Lemma~\ref{l-search-broad},
which proves correctness and bounds the time complexity).
First, however, we describe in detail
the branching scheme used to search for the point $\mathbf{p}$ in
step~\ref{en-broad-adm}.
Specifically, we are searching for a (rational) point
$\mathbf{x} \in \R^{7n}$ that satisfies:
\begin{enumerate}[(i)]
\item $\mathbf{x} \geq 0$;
\item the matching equations $A\mathbf{x}=0$;
\item the quadrilateral constraints;
\item $\chi(\mathbf{x}) \geq 1$; and
\item at least one of the $4n$ triangle coordinates of $\mathbf{x}$ is
equal to zero.
\end{enumerate}
For convenience, let $x^\triangle_i$ denote the $i$th
triangle coordinate of $\mathbf{x}$ ($1 \leq i \leq 4n$), and let
$x^\square_{t,k}$ denote the quadrilateral coordinate
that counts the $k$th type of quadrilateral in the $t$th tetrahedron
($1 \leq t \leq n$, $1 \leq k \leq 3$).

Conditions (i), (ii) and (iv) are just linear constraints over
$\R^{7n}$, and can be used seamlessly with linear programming.
The remaining conditions (iii) and (v) are combinatorial,
and cause more difficulties.
We could formulate them as integer constraints,
but the resulting integer programming problems are impractical for off-the-shelf
solvers (they induce extremely large coefficients, and require
exact arithmetic) \cite{burton12-crosscap}.
Here we enforce these combinatorial constraints using branching:
\begin{itemize}
    \item \emph{Triangle branching:} \\
    We must make an initial decision on which triangle coordinate will be
    zero.
    To avoid redundancy, we select the \emph{first} such
    coordinate; that is, we choose some $i \in \{1,\ldots,4n\}$
    for which $x^\triangle_1,\ldots,x^\triangle_{i-1} \geq 1$ and
    $x^\triangle_i = 0$.
    This gives $4n$ branches, one for each $i$.

    \item \emph{Quadrilateral branching:} \\
    For each $t=1,\ldots,n$, we must decide which quadrilateral
    coordinate in the $t$th tetrahedron will be non-zero, if any.
    That is, we choose between branches
    (a)~$x^\square_{t,1}=x^\square_{t,2}=x^\square_{t,3}=0$;
    (b)~$x^\square_{t,1} \geq 1$ and $x^\square_{t,2}=x^\square_{t,3}=0$;
    (c)~$x^\square_{t,2} \geq 1$ and $x^\square_{t,1}=x^\square_{t,3}=0$;
    or
    (d)~$x^\square_{t,3} \geq 1$ and $x^\square_{t,1}=x^\square_{t,2}=0$.
\end{itemize}

Here we replace all strict inequalities $x_i > 0$ with
non-strict inequalities $x_i \geq 1$: this
makes the linear programming simpler, and does not affect the existence
of solutions because any solution $\mathbf{x}$ can be rescaled to some
solution $\lambda\mathbf{x}$ with all coordinates $\geq 1$.

\begin{figure}[tb]
    \centering
    \includegraphics[scale=0.75]{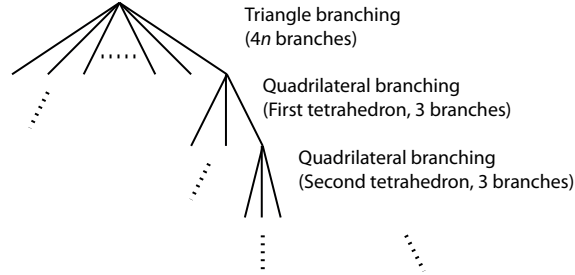}
    \caption{The combinatorial search tree}
    \label{fig-tree}
\end{figure}

We arrange these branches into a combinatorial search tree, as
illustrated in Figure~\ref{fig-tree}: the triangle branch is chosen
first, and then the $n$ quadrilateral branches are chosen in some order
(which we describe shortly).  To reduce the total amount of branching,
we merge quadrilateral branches (a) and (b) into the single branch
(a*)~$x^\square_{t,1} \geq 0$ and $x^\square_{t,2}=x^\square_{t,3}=0$,
so there are only three (not four) branches at each quadrilateral decision.

To run step~\ref{en-broad-adm} of Algorithm~\ref{alg-search-broad}, we begin
at the root of this search tree and traverse it in a depth-first
fashion.  At all stages we maintain a system $\mathcal{N}$ of
linear constraints corresponding to the branches that have been chosen:
\begin{itemize}
    \item At the root of the tree we initialise $\mathcal{N}$ to the
    constraints $\mathbf{x} \geq 0$, the matching equations
    $A\mathbf{x}=0$, and $\chi(\mathbf{x}) \geq 1$.

    \item Each time we follow a new branch down, we extend $\mathcal{N}$
    by adding additional constraints to describe the branch that was
    chosen.  For instance, when following a triangle branch we would add
    constraints of the form
    $x^\triangle_1,\ldots,x^\triangle_{i-1} \geq 1$ and $x^\triangle_i = 0$,
    and when following quadrilateral branch~(d) for some tetrahedron
    we would add the constraints
    $x^\square_{t,3} \geq 1$ and $x^\square_{t,1}=x^\square_{t,2}=0$.

    \item Conversely, each time we backtrack and follow a branch up,
    we remove these additional constraints from $\mathcal{N}$.
\end{itemize}

Crucially, each time we enter a node in the search tree we
\emph{test the system $\mathcal{N}$ for feasibility} (i.e., whether it
has a solution).  If the system is not feasible, then we backtrack
immediately (since the branches chosen thus far are inconsistent,
and cannot lead to a desired admissible point $\mathbf{p} \in \R^{7n}$).
If we ever reach a \emph{leaf node} of the tree---that is, a node at
which we have followed a triangle branch plus quadrilateral branches
for all $n$ tetrahedra---then
any solution to the system $\mathcal{N}$ is the admissible point
$\mathbf{p} \in \R^{7n}$ that we require, at which point we can
terminate the search and move immediately to
step~\ref{en-broad-shrink} of Algorithm~\ref{alg-search-broad}.

We emphasise the underlying branch-and-bound motivations behind this search.
If we treat $\chi$ as a linear objective function to maximise,
then our feasibility tests at each node play the role of
\emph{linear relaxations}---in the absence of constraints for
decisions not yet made, they test whether
the maximum $\chi$ could possibly be positive in the
subtree below the current node.

As is common in branch-and-bound, the order in which we make
individual branching decisions can have an enormous impact on the
overall running time.
With this in mind, we now describe important enhancements to the
vertical layout of the search tree, i.e., the order in which we
make our $n$ quadrilateral decisions.
Figure~\ref{fig-tree} depicts these decisions as
ordered by tetrahedron number; however, we can dynamically reorder these
decisions to take full advantage of the interaction between different
constraints.
Experimentation suggests that the following optimisations are crucial to
the polynomial-time behaviour that we observe in practice
in Section~\ref{s-expt}.

\begin{itemize}
    \item After choosing an initial triangle branch,
    if the triangle coordinate that we set to zero was in the $i$th
    tetrahedron, then we immediately branch on quadrilateral coordinates in the
    \emph{same ($i$th) tetrahedron}.

    This is because setting a triangle \emph{and} two quadrilateral
    coordinates to zero in the same tetrahedron is an extremely powerful
    constraint, and immediately eliminates several other ``nearby''
    quadrilateral and triangle coordinates from surrounding tetrahedra.

    \item For subsequent quadrilateral decisions, we greedily branch on the
    tetrahedron that yields the \emph{fewest feasible child nodes}.
    That is, for each unused tetrahedron we build the three
    constraint systems $\mathcal{N}$ that would result at each
    of the three child nodes if we branched next on that tetrahedron,
    and we count how many of these three systems are feasible.
    We then branch on quadrilateral coordinates in whichever
    tetrahedron minimises this count.

    Essentially, this greedy approach allows us to lock in
    additional ``forced'' constraints as quickly as possible.
    Moreover, if there is some tetrahedron in which
    \emph{none} of the three quadrilateral branches yield a feasible
    child node, our approach detects this and allows us to
    backtrack immediately.
    A drawback of this greedy method is that it requires a linear number of
    feasibility tests at each node of the search tree, but this only
    affects the running time by a polynomial factor.
\end{itemize}

\begin{lemma} \label{l-search-broad}
    Algorithm~\ref{alg-search-broad}
    (which implements the search in step~\ref{en-alg-search} of
    Algorithm~\ref{alg-unknot})
    is correct in either finding a
    connected normal surface $S$ which is not the vertex link and
    has positive Euler characteristic, or in
    showing that no such surface exists.
    Moreover, the algorithm runs in time $O(3^n \times \mathrm{poly}(n))$.
\end{lemma}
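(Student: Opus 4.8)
The proof must establish two things: correctness of Algorithm~\ref{alg-search-broad}, and the $O(3^n \times \mathrm{poly}(n))$ running time bound.

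For correctness, I would argue as follows. First, I would show that step~\ref{en-broad-adm} correctly decides whether an admissible point $\mathbf{p}$ with $\chi(\mathbf{p}) \geq 1$ and at least one zero triangle coordinate exists. The key observation is that the combinatorial search tree exhausts all ``patterns'' of the combinatorial conditions (iii) and (v): every leaf corresponds to a choice of which triangle coordinate vanishes first and which quadrilateral type (if any) is active in each tetrahedron, and these choices, together with the linear constraints carried in $\mathcal{N}$, capture exactly the quadrilateral constraints and condition (v). Hence any admissible $\mathbf{p}$ satisfying (iv) and (v) yields a feasible leaf, and conversely a feasible leaf yields such a $\mathbf{p}$ (here I would note the harmless rescaling that replaces $x_i > 0$ by $x_i \geq 1$, and that merging branches (a) and (b) into (a*) loses nothing since (a*) is simply the disjunction of (a) and (b) and any solution satisfying the quadrilateral constraints falls into one of these). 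The dynamic reordering of quadrilateral decisions and the greedy child-counting heuristic do not affect correctness, since they only change the \emph{order} in which an exhaustive set of branches is visited and the early-backtracking they trigger is provably sound (a node with no feasible children has no feasible leaf below it). Pruning an infeasible node is justified because adding constraints can only shrink the feasible set. Then, given the point $\mathbf{p}$, I would invoke Lemma~\ref{l-conn}: steps~\ref{en-broad-shrink} and~\ref{en-broad-int} set as many coordinates to zero as possible while preserving feasibility, producing a solution $\mathbf{q}$ to which no strictly smaller-support admissible point with $\chi > 0$ compares, so Lemma~\ref{l-conn} tells us $\lambda\mathbf{q}$ represents a connected normal surface $S$ with $\chi(S) > 0$. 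Finally, Lemma~\ref{l-nonvtxlink} together with the preserved condition (v) (some triangle coordinate is zero) guarantees $S$ is not the vertex link. Conversely, if step~\ref{en-broad-adm} reports no such $\mathbf{p}$, then no connected surface of the desired type can exist either, since such a surface would itself provide an admissible point satisfying (i)--(v) (after rescaling, and using Lemma~\ref{l-nonvtxlink} to see that a non-vertex-link connected surface has a zero triangle coordinate).

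For the running time, I would bound the size of the search tree and the cost per node. The tree has one triangle decision with $4n$ branches at the root, followed by $n$ quadrilateral decisions each with (after the (a)/(b) merge) $3$ branches; this gives at most $4n \cdot 3^n$ leaves and $O(n \cdot 3^n)$ nodes total. At each node we perform the feasibility test on $\mathcal{N}$ (a linear program of size $\mathrm{poly}(n)$), plus the greedy reordering which costs a further linear number of LP feasibility tests; each LP feasibility test runs in polynomial time \cite{hacijan79-polylp,karmarkar84-new}, so the per-node cost is $\mathrm{poly}(n)$. Multiplying gives $O(3^n \times \mathrm{poly}(n))$ for step~\ref{en-broad-adm}. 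Steps~\ref{en-broad-shrink} and~\ref{en-broad-int} perform $O(n)$ further LP feasibility tests and one integer-rescaling, all in $\mathrm{poly}(n)$ time, which is absorbed into the bound. (I would note that throughout we work with rational solutions, and that the bit-sizes of the coordinates of vertices of the polyhedra involved remain polynomially bounded, which is standard.)

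The main obstacle, I expect, is the correctness argument for step~\ref{en-broad-adm}---specifically, verifying carefully that the branching scheme, as optimised (dynamic reordering, greedy child-minimisation, the (a*) merge, selecting the \emph{first} vanishing triangle coordinate), remains a faithful exhaustive search for an admissible point satisfying conditions (i)--(v), and that every pruning step is logically sound. The $O(3^n \times \mathrm{poly}(n))$ count is then essentially routine combinatorics on the tree plus the polynomial-time solvability of linear programming, but I would be careful to state explicitly why the greedy reordering's extra feasibility tests, and the cost of maintaining and updating $\mathcal{N}$ as we descend and backtrack, only contribute polynomial overhead per node rather than something larger.
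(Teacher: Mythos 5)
Your proposal follows essentially the same route as the paper: correctness of the branching via exhaustiveness and sound pruning, then Lemma~\ref{l-nonvtxlink} to handle the not-a-vertex-link condition, Lemma~\ref{l-conn} to get connectedness from the aggressive zeroing in step~\ref{en-broad-shrink}, and a node count of $4n\cdot 3^n$ times a polynomial per-node cost for the time bound. One place where the paper does nontrivial work that you gloss as ``standard'' is establishing that each LP has polynomially bounded encoding size: the matching equations have $O(1)$ coefficients automatically, but the Euler characteristic $\chi$ is not unique, and a careless choice could have large or non-integer coefficients. The paper constructs a specific $\chi = \chi_2 - \chi_1 + \chi_0$ with integer coefficients in $\{-3,\ldots,+5\}$, and similarly bounds the bit-size of the output $\lambda\mathbf{q}$ by first showing $\mathbf{q}$ lies on an extreme ray of the cone $\{\mathbf{x}\geq 0,\ A\mathbf{x}=0\}$ and then invoking Hass et al.\ to get an $\exp(O(n))$ bound on coordinates. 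These are the details that make the per-node polynomial cost and the final integer rescaling genuinely $\mathrm{poly}(n)$ rather than assumed so; you should spell them out rather than defer to ``standard'' LP facts, since the Euler-characteristic coefficient bound in particular is specific to this setting and requires choosing the right formulation. Conversely, you give a somewhat more explicit argument than the paper does for why the dynamic reordering, the greedy child-minimisation, and the (a*)~merge preserve exhaustiveness and soundness of the branching---the paper largely takes these as evident---so your treatment of that part is actually a helpful supplement.
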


\begin{proof}
    Recall that step~\ref{en-broad-adm} of Algorithm~\ref{alg-search-broad}
    searches for an admissible point $\mathbf{p}\in\R^{7n}$ for which
    $\chi(\mathbf{p})\geq 1$, and for which at least one of the $4n$
    triangle coordinates is zero.

    Suppose we fail to find such a point $\mathbf{p}$ in
    step~\ref{en-broad-adm}.
    If there \emph{were} some connected normal surface $S$ which is not the
    vertex link and has positive Euler characteristic, then
    (by Lemma~\ref{l-nonvtxlink}) the
    vector $\mathbf{p} = \mathbf{v}(S) \in \R^{7n}$ would satisfy our
    search criteria.  Therefore the algorithm is correct in concluding
    that no such surface $S$ exists.

    Suppose we do find such a point $\mathbf{p}$ in step~\ref{en-broad-adm}.
    Consider now the point $\mathbf{q}$ that
    we construct in steps~\ref{en-broad-shrink} and \ref{en-broad-int}
    of the algorithm.  It is clear that $\mathbf{q}$ exists (i.e.,
    the final system $\mathcal{L}$ is feasible), since the
    initial system $\mathcal{L}$ has $\mathbf{p}$ as a solution,
    each additional constraint added in step~\ref{en-broad-shrink}(a)
    is again satisfied by $\mathbf{p}$, and
    each additional constraint added in step~\ref{en-broad-shrink}(b)
    is explicitly tested for feasibility.

    We now claim that $\mathbf{q}$ is admissible.  This is true because
    the conditions
    $\mathbf{q}\geq 0$ and $A\mathbf{q}=0$ are
    built into the system $\mathcal{L}$; moreover, by
    step~\ref{en-broad-shrink}(a) we know that each coordinate that is
    zero in $\mathbf{p}$ is also zero in $\mathbf{q}$, and so because
    $\mathbf{p}$ satisfies the quadrilateral constraints then
    $\mathbf{q}$ must also.

    We next show that $\mathbf{q}$ scales to a rational vector.
    Let $\mathcal{P}$ be the rational polyhedron in $\R^{7n}$ defined by
    the system $\mathcal{L}$.  From the structure of the inequalities
    that define $\mathcal{L}$, we see that each facet of $\mathcal{P}$
    is obtained by intersecting $\mathcal{P}$ with a supporting
    hyperplane of the form (i)~$x_i=0$, or (ii)~$\chi(\mathbf{x}) = 1$.
    From step~\ref{en-broad-shrink} we see that any intersection
    of $\mathcal{P}$ with a hyperplane $x_i=0$ either includes all of
    $\mathcal{P}$ or is the empty set; either way we cannot obtain a
    facet.  Therefore $\mathcal{P}$ has only one facet (the intersection
    with $\chi(\mathbf{x})=1$), and it follows that $\mathcal{P}$ is a
    one-dimensional ray.  Since $\mathcal{L}$ is a rational system we
    conclude that the solution $\mathbf{q}$ scales down to a rational
    point (the vertex of $\mathcal{P}$ at the beginning of this ray).

    It follows that in step~\ref{en-broad-int} the multiple
    $\lambda\mathbf{q}$ is well-defined, and represents an admissible
    integer vector with $\chi(\lambda\mathbf{q}) > 0$.
    We therefore have $\lambda\mathbf{q}=\mathbf{v}(S)$ for some normal
    surface $S$ in $\tri$ with positive Euler characteristic.
    Because some triangle coordinate of $\mathbf{p}$ is
    zero, step~\ref{en-broad-shrink}(a) ensures that some triangle coordinate of
    $\mathbf{v}(S)$ is zero, and it follows from
    Lemma~\ref{l-nonvtxlink} that $S$ is not the vertex link.

    We now use Lemma~\ref{l-conn} to show that this surface is connected.
    Suppose there were some normal surface $S'$ with positive
    Euler characteristic where (i)~for every coordinate of
    $\lambda\mathbf{q}=\mathbf{v}(S)$ which is zero, the corresponding
    coordinate of $\mathbf{v}(S')$ is likewise zero; and
    (ii)~there is some $i$ for which the
    $i$th coordinate of $\lambda\mathbf{q}=\mathbf{v}(S)$ is non-zero
    but the $i$th coordinate of $\mathbf{v}(S')$ is zero.
    Then in step~\ref{en-broad-shrink}(b) of the algorithm, for this
    particular value of $i$, the constraint $x_i=0$ would have given a feasible
    system (having $\mathbf{v}(S')$ as a solution).
    Therefore $x_i=0$ would have been a condition in
    the final system $\mathcal{L}$,
    and $\mathbf{q}$ could not have been a solution---a contradiction.
    Therefore $S$ satisfies the conditions of Lemma~\ref{l-conn},
    and so (because our scaling factor $\lambda$ realises the smallest
    possible integer multiple) the surface $S$ must be connected.

    We now have that $S$ is a connected normal surface which is
    not the vertex link and which has positive Euler characteristic.
    This concludes the proof that the output of
    Algorithm~\ref{alg-search-broad} is correct.

    To finish, we analyse the time complexity of the algorithm.
    First we observe that every feasibility test that appears in the algorithm
    involves a linear number of constraints, each with integer coefficients of
    size $O(1)$.  In particular:
    \begin{itemize}
        \item The system of matching equations $A\mathbf{x}=0$ contains
        at most $6n$ equations, each of the form
        $x^\triangle_i + x^\square_{j,k} =
        x^\triangle_u + x^\square_{v,w}$.

        \item Recall from the preliminaries section that there are many
        choices for our linear Euler characteristic function $\chi$.
        To ensure that this function has
        constant sized integer coefficients, we choose the
        formulation $\chi(\mathbf{x})=\chi_2(\mathbf{x})-\chi_1(\mathbf{x})+
        \chi_0(\mathbf{x})$,
        where:
        \begin{itemize}
            \item $\chi_2(\mathbf{x})$ is the sum of all coordinates of
            $\mathbf{x}$;
            \item $\chi_1(\mathbf{x}) = \sum_F \chi_1^F(\mathbf{x})$,
            where $F$ ranges over all triangular faces $F$ of the
            triangulation $\tri$, and
            where each $\chi_1^F(\mathbf{x})$ is computed by choosing
            an arbitrary tetrahedron $\Delta_F$ that contains $F$,
            and summing the six
            coordinates of $\mathbf{x}$ that correspond to normal discs
            in $\Delta_F$ that meet $F$;
            \item $\chi_0(\mathbf{x}) = \sum_e \chi_0^e(\mathbf{x})$,
            where $e$ ranges over all edges of $\tri$,
            and each $\chi_0^e(\mathbf{x})$ is likewise computed by
            choosing an arbitrary tetrahedron $\Delta_e$ that contains $e$
            and summing the four
            coordinates of $\mathbf{x}$ that correspond to normal discs
            in $\Delta_e$ that meet $e$.
        \end{itemize}
        If some face $F$ appears multiple times within the
        tetrahedron $\Delta_F$ (i.e., two faces of $\Delta_F$ are
        identified together), then we restrict our
        attention to just one of these appearances when computing
        $\chi_1^F(\mathbf{x})$; likewise with $\chi_0^e(\mathbf{x})$.

        We note that this choice of Euler characteristic function is a
        valid one:
        if the vector $\mathbf{x}$ represents a normal surface $S$ then
        $\chi_2(\mathbf{x})$, $\chi_1(\mathbf{x})$ and $\chi_0(\mathbf{x})$
        count the number of discs, edges and vertices respectively in
        $S$, and so $\chi(\mathbf{x})$ is indeed
        the Euler characteristic of $S$.

        Regarding the coefficients of this function:
        since each coordinate of $\mathbf{x}$ may appear in at most four
        distinct terms $\chi_1^F(\mathbf{x})$ and at most four distinct
        terms $\chi_0^e(\mathbf{x})$, the formulation above expresses
        $\chi(\mathbf{x})$ as a linear function with integer
        coefficients all in the range $-3,\ldots,+5$.
    \end{itemize}

    This establishes that every feasibility test that appears in the algorithm
    involves a linear number of constraints each with constant sized
    coefficients.  It follows that every such test can be solved in
    polynomial time using linear programming techniques.

    We can now measure the time complexity of each step of the algorithm:
    \begin{itemize}
        \item
        For step~\ref{en-broad-adm}
        we simply count nodes: there are $4n$ branches for
        the initial triangle decision and three branches for each of the $n$
        quadrilateral decisions, giving $4n \cdot 3^n$ leaf nodes in the
        worst case.
        Each feasibility test can be run in polynomial time as noted above,
        yielding an overall running time of $O(3^n \times \mathrm{poly}(n))$.

        \item
        Step~\ref{en-broad-shrink} runs in polynomial time because
        throughout its evolution the system $\mathcal{L}$ always contains
        $O(n)$ constraints, we only modify it $O(n)$ times, and for each
        modification we run a single feasibility test which can be
        performed in polynomial time as before.

        \item
        For step~\ref{en-broad-int} we must show that we can
        perform the necessary arithmetic on $\mathbf{q}$ in polynomial time.
        For this it suffices to show that the coordinates of our
        smallest integer multiple $\lambda\mathbf{q}$ each have
        just $O(n)$ bits.

        By following the same argument as used in Lemma~\ref{l-conn},
        we find that $\mathbf{q}$ lies on an extreme ray of the
        polyhedral cone $\mathcal{C}$
        defined by $\mathbf{x} \geq 0$ and $A\mathbf{x}=0$.
        This can also be seen directly:
        by removing the non-homogeneous constraint $\chi(\mathbf{x})\geq 1$
        from the final system $\mathcal{L}$ we extend our
        one-dimensional solution set to a homogeneous ray in $\R^{7n}$,
        and by removing the additional constraints $x_i=0$ that were added in
        step~\ref{en-broad-shrink} we further enlarge this to
        the full cone $\mathcal{C}$, with our original solution set now on
        an extreme ray of $\mathcal{C}$.

        We finish by invoking a result of
        Hass et~al.\ \cite[Lemma~6.1]{hass99-knotnp},
        which shows that any extreme ray of $\mathcal{C}$ can be
        expressed as an integer vector with all
        coordinates bounded by $\exp(O(n))$---that is, by $O(n)$ bits.
    \end{itemize}

    This establishes an overall running time of
    $O(3^n \times \mathrm{poly}(n))$ for Algorithm~\ref{alg-search-broad}.
\end{proof}

We pause to make some final observations on step~\ref{en-broad-adm}
of Algorithm~\ref{alg-search-broad}, where we find the point
$\mathbf{p} \in \R^{7n}$ using our branching scheme.
\begin{itemize}
    \item The quadrilateral constraints are special forms of
    SOS~1 constraints: an \emph{SOS~1}, or
    \emph{special ordered set of type~1}, is a set of variables
    at most one of which may be non-zero \cite{beale70-special}.
    Thus, in effect,
    step~\ref{en-broad-adm} of Algorithm~\ref{alg-search-broad}
    solves a family of
    SOS~1 constrained problems using customised branching rules.

    \item The linear programs that we solve in
    step~\ref{en-broad-adm} of Algorithm~\ref{alg-search-broad}
    do not involve any explicit objective function---we are simply
    interested in testing feasibility.
    One could also introduce an explicit objective function in the
    hope that, if the point $\mathbf{p}$ does exist, then we might find
    it more quickly.  For instance, we could minimise
    the sum of triangle coordinates (in the hope that one of the $4n$
    triangle coordinates might be zero), or minimise the sum of
    quadrilateral coordinates (in the hope that the quadrilateral
    constraints might be satisfied).

    However, since such heuristics
    do not guarantee to satisfy the relevant combinatorial constraints,
    they are primarily useful only in the cases where $\mathbf{p}$
    does exist---that is, where the input knot is trivial and/or the
    triangulation of the knot complement is ``inefficient'' (i.e., it can
    be simplified by crushing).
    We note that such inputs are often already ``easy'',
    in the sense that they can
    typically be resolved (or at least reduced) using fast local
    simplification techniques instead, as mentioned in the introduction.
\end{itemize}


\subsection{Proofs of correctness and running time} \label{s-alg-proofs}

Our final task in this section is to tie everything together:
we prove that the full unknot recognition algorithm is correct
(Theorem~\ref{t-correct}) and describe its worst-case time complexity
(Theorem~\ref{t-fast}).

\begin{theorem}[Correctness] \label{t-correct}
    Algorithm~\ref{alg-unknot} is correct in determining whether
    the input knot $K$ is trivial or non-trivial.
\end{theorem}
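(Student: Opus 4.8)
The plan is to assemble the lemmas already proved into a single correctness argument, viewing Algorithm~\ref{alg-unknot} as a loop over steps~\ref{en-alg-simplify}--\ref{en-alg-search} and verifying three things: (a)~the triangulation always triangulates $\kcomp$, so the knot $K$ is never altered; (b)~the loop terminates; and (c)~each of the two terminating branches of step~\ref{en-alg-search} reports the correct verdict. All of the genuinely substantive content has been discharged in Lemmas~\ref{l-onevtx}, \ref{l-crush} and \ref{l-search-broad}, so what remains is essentially bookkeeping, with two small points requiring care (noted below).

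For (a), I would record the invariant ``$\tri$ triangulates $\kcomp$'': it holds after step~\ref{en-alg-tri} (the Hass et~al.\ construction), is preserved by step~\ref{en-alg-simplify} (Lemma~\ref{l-onevtx}), and is preserved by the crushing branch of step~\ref{en-alg-search} (Lemma~\ref{l-crush}; note its proof also shows the crushed surface $S$ is a sphere or a trivial-boundary disc, the non-trivial-boundary case being diverted to the ``trivial'' output). Hence Haken's criterion (Theorem~\ref{t-haken}) applies to $\tri$ at every iteration. For (b), step~\ref{en-alg-simplify} never increases the tetrahedron count (Lemma~\ref{l-onevtx}), while each pass through the crushing branch strictly decreases it (Lemma~\ref{l-crush}); since the count is a positive integer, the loop executes at most $n$ times before reaching a terminating branch, and each individual step halts by the running-time bounds in the respective lemmas. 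For (c): if step~\ref{en-alg-search} returns a connected non-vertex-linking surface $S$ that is a disc with non-trivial boundary in $\partial\tri$, then $K$ is trivial directly by Theorem~\ref{t-haken}. If step~\ref{en-alg-search} reports that no connected non-vertex-linking surface of positive Euler characteristic exists, I argue contrapositively: were $K$ trivial, Theorem~\ref{t-haken} would supply a normal disc $D\subset\tri$ with non-trivial boundary; this $D$ is connected with $\chi(D)=1>0$, and since $\tri$ is one-vertex with torus boundary its vertex link is a disc with \emph{trivial} boundary, so $D$ is not the vertex link. Thus $D$ is exactly the kind of surface the search is guaranteed to find (Lemma~\ref{l-search-broad}), contradicting the ``no such surface'' outcome; hence $K$ is non-trivial. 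This ``no surface $\Rightarrow$ non-trivial'' implication, and in particular the observation that the one-vertex-plus-torus-boundary structure forces the vertex link to have trivial boundary, is the one place where I expect to have to be slightly careful.

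The only remaining obstacle is the boundary case $n=1$, for which Lemma~\ref{l-crush} has no content. Here I would invoke the fact (already used in the proof of Lemma~\ref{l-onevtx}) that the unique one-tetrahedron knot complement is the standard one-vertex triangulation of the solid torus: this triangulation contains a normal meridian disc with non-trivial boundary that is not the vertex link, so step~\ref{en-alg-search} correctly reports ``trivial'' (equivalently, one may simply halt with ``trivial'' as soon as $n$ drops to $1$, which is valid since the solid torus is the unknot complement). With this settled, termination together with the two verified outputs shows that Algorithm~\ref{alg-unknot} always halts and always outputs the correct answer, completing the proof.
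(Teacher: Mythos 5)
Your proof is correct and follows essentially the same route as the paper: both reduce correctness to verifying the three branches of step~\ref{en-alg-search} via Theorem~\ref{t-haken} together with Lemmas~\ref{l-onevtx}, \ref{l-crush} and \ref{l-search-broad}. You supply somewhat more detail than the paper's brief proof—in particular the explicit observation that the one-vertex torus-boundary structure forces the vertex link to have trivial boundary (so Haken's disc cannot be discarded as a vertex link), and the handling of the $n=1$ boundary case where Lemma~\ref{l-crush} does not apply—both of which the paper leaves implicit.
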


\begin{proof}
    All that remains is to show that the claims made in
    step~\ref{en-alg-search} of the algorithm
    (after searching for the normal surface $S$) are correct.

    By Theorem~\ref{t-haken}, it is clear that
    if no surface $S$ is found then $K$ is non-trivial,
    and if $S$ is a disc with non-trivial boundary then $K$ is trivial.
    Otherwise the algorithm tells us to crush $S$,
    whereupon Lemma~\ref{l-crush} shows that the new
    triangulation $\tri'$ represents the same knot complement $\kcomp$
    with strictly fewer than $n$ tetrahedra, as claimed.
\end{proof}

\begin{theorem}[Running time] \label{t-fast}
    Let $c$ be the number of
    crossings in the input knot diagram for Algorithm~\ref{alg-unknot}.
    With the exception of the search in step~\ref{en-alg-search}
    (where we search for the normal surface $S$), every step of
    Algorithm~\ref{alg-unknot} runs in time polynomial in $c$.
    In contrast, the search in step~\ref{en-alg-search} runs in time
    $O(3^n \times \mathrm{poly}(n))$, where $n \in O(c)$ is the number of
    tetrahedra in the triangulation $\tri$.
    Every step (including this search) is repeated at most
    $O(c)$ times.
\end{theorem}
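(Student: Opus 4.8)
The plan is to handle the three assertions of Theorem~\ref{t-fast} separately, since each corresponds to a different collection of steps in Algorithm~\ref{alg-unknot}, and most of the work has already been done in the preceding lemmata.

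First I would dispense with step~\ref{en-alg-tri}: by the Hass et~al.\ construction \cite[Lemma~7.2]{hass99-knotnp}, building the initial triangulation of $\kcomp$ takes $O(c\log c)$ time and yields $n\in O(c)$ tetrahedra. This simultaneously establishes the parameter relationship $n\in O(c)$ that the statement invokes. Next, the one-vertex conversion in step~\ref{en-alg-simplify} is governed by Lemma~\ref{l-onevtx}, which gives a running time of $O(n^3)$ and does not increase the tetrahedron count; substituting $n\in O(c)$, this is polynomial in $c$. Similarly, the crushing subroutine used after step~\ref{en-alg-search} runs in $O(n)$ time by Lemma~\ref{l-crush}, hence polynomial in $c$, and strictly decreases $n$. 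The remaining bookkeeping in step~\ref{en-alg-search} --- testing whether the surface $S$ returned is a disc with non-trivial boundary curve in $\partial\tri$ --- is a matter of building the skeleton of the (small) normal surface and examining its boundary, which is linear in the size of $\mathbf{v}(S)$, itself $O(n)$; I would note this explicitly so that the ``every step except the search'' clause is fully covered.

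For the second assertion, the search in step~\ref{en-alg-search} is exactly what Algorithm~\ref{alg-search-broad} implements, and Lemma~\ref{l-search-broad} already gives both correctness and the $O(3^n\times\mathrm{poly}(n))$ bound; I would simply cite it.

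The third assertion --- that the outer loop (return to step~\ref{en-alg-simplify}) repeats at most $O(c)$ times --- follows from a monovariant argument: each pass through step~\ref{en-alg-search} that does not terminate the algorithm ends by crushing a non-vertex-linking surface $S$ of positive Euler characteristic, and Lemma~\ref{l-crush} (via Observation~\ref{o-crush-shrink}) guarantees that the resulting triangulation $\tri'$ has strictly fewer tetrahedra than $\tri$. Since the tetrahedron count starts at $n\in O(c)$ and drops by at least one each iteration while remaining a positive integer, there are at most $n\in O(c)$ iterations. (One should also observe that re-running step~\ref{en-alg-simplify} on $\tri'$ cannot increase the count above its current value, by Lemma~\ref{l-onevtx}, so the monovariant is genuinely non-increasing across the whole loop body.) I expect no real obstacle here: the substantive content is entirely in Lemmata~\ref{l-crush}, \ref{l-onevtx} and \ref{l-search-broad}, and this theorem is a matter of assembling them, tracking the substitution $n\in O(c)$, and making the termination count explicit. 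If anything needs care, it is only the remark that the non-search steps are polynomial in $c$ rather than merely in $n$ --- but since $n\in O(c)$ this is immediate once each step's complexity has been stated in terms of $n$.
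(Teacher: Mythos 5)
Your decomposition matches the paper's exactly: Hass et~al.\ \cite[Lemma~7.2]{hass99-knotnp} for step~\ref{en-alg-tri}, Lemma~\ref{l-onevtx} for step~\ref{en-alg-simplify}, Lemma~\ref{l-search-broad} for the search, Lemma~\ref{l-crush} for the crushing, and the strictly decreasing tetrahedron count for the $O(c)$ iteration bound. The one place you should not follow the route you sketch is the test for whether $S$ is a disc with non-trivial boundary. You propose ``building the skeleton of the (small) normal surface and examining its boundary, which is linear in the size of $\mathbf{v}(S)$,'' but the surface encoded by $\mathbf{v}(S)$ is not small as a cell complex: its coordinates can be as large as $\exp(O(n))$ (this is precisely the bound from Hass et~al.\ \cite[Lemma~6.1]{hass99-knotnp} invoked inside Lemma~\ref{l-search-broad}), so $S$ may have exponentially many normal discs and its skeleton cannot be constructed in polynomial time. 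The vector $\mathbf{v}(S)$ is compact, but the surface it represents is not. The paper's argument sidesteps this: since $\tri$ is one-vertex, the boundary torus $\partial\tri$ consists of exactly two triangles, and the boundary of $S$ is non-trivial on $\partial\tri$ precisely when it is not a vertex-linking loop, which can be determined by inspecting the $7n$ coordinates of $\mathbf{v}(S)$ directly in $O(n)$ time without reconstructing the surface. You flagged the right step as needing an explicit argument; you just need to supply the coordinate-level criterion rather than a geometric reconstruction of $S$.
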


\begin{proof}
    Following Hass et~al.\ \cite[Lemma~7.2]{hass99-knotnp},
    step~\ref{en-alg-tri} of the algorithm
    takes $O(c \log c)$ time and builds a
    triangulation of $\kcomp$ with $n \in O(c)$ tetrahedra.
    By Lemma~\ref{l-onevtx}, the conversion in step~\ref{en-alg-simplify}
    to a one-vertex triangulation then takes $O(c^3)$ time.

    Lemma~\ref{l-search-broad} shows that the search in
    step~\ref{en-alg-search} runs in time $O(3^n \times \mathrm{poly}(n))$.
    Following this search, the only significant actions
    are (i)~testing whether the boundary of $S$ is non-trivial
    in $\partial \tri$,
    and (ii)~potentially crushing the surface $S$.

    Because $\tri$ is a one-vertex triangulation,
    the boundary torus $\partial \tri$ contains just two triangles,
    and the boundary of $S$ follows a non-trivial curve on $\partial \tri$
    if and only if it
    is not a trivial loop encircling the (unique) vertex.
    Therefore we can test $S$ for non-trivial boundary in $O(n)$ time
    just by examining the coordinates of $\mathbf{v}(S)$.
    Finally, Lemma~\ref{l-crush} shows that we can crush the surface
    $S$ in $O(n)$ time also.

    Each iteration through these steps results in either termination
    or a reduction in the number of tetrahedra, and so
    we repeat these steps at most $n \in O(c)$ times.
\end{proof}


\section{Experimental performance} \label{s-expt}

Here we describe the results of extensive testing of the new unknot
recognition algorithm over all $2977$ prime knots with $\leq 12$
crossings.

The algorithm has been implemented in {\cpp} using the open-source
computational topology software package {\regina} \cite{burton04-regina,regina},
and is now built directly into {\regina} as of version~4.94.
We briefly describe some aspects of the implementation, and then
describe the experimental data and results.

\subsection{Implementation}

Although it is possible to test the feasibility of a system of
linear constraints in polynomial time (for instance, using interior
point methods \cite{hacijan79-polylp,karmarkar84-new}),
we use a variant of the simplex
method due to its ease of implementation and its excellent performance
in practical settings (as discussed further in Section~\ref{s-disc}).
Specifically, we use the revised dual simplex method
\cite{lemke54-dualsimplex} with an implementation that exploits the
sparseness of the matching equations $A\mathbf{x}=0$.

For a pivoting rule, we use Dantzig's classical method of choosing the
exiting variable with largest magnitude negative value in the tableaux
\cite{dantzig63-linprog}.
Although this method is fast and works well to reduce the total number
of pivots, it can lead to cycling.  We therefore use Brent's algorithm
to detect cycling \cite{brent80-montecarlo}, and when it occurs
we switch to Bland's rule instead
\cite{bland77-pivoting}, which exhibits weaker performance but does not
cycle.  We note that cycling was detected for some experimental inputs,
and so these cycle-breaking techniques are indeed necessary in practice.

All computations use exact integer and rational arithmetic, provided
by the {GNU} multiple precision arithmetic library \cite{gmp}.
To limit the overhead, we work in native integers wherever possible but
test for overflow on all arithmetical operations, and only switch to
exact arithmetic when necessary.  This behaviour, which improves
performance surprisingly well, is inspired by
(but far less sophisticated than) the lazy evaluation methods
used for exact arithmetic in the \emph{CGAL} computational geometry
library \cite{cgal,bronnimann01-interval}.

\subsection{Experimental results} \label{s-expt-results}

Our experimental data set consists of all
$2977$ prime knots with $\leq 12$ crossings, as taken
from the {\knotinfo} database \cite{www-knotinfo-jun11}.
This is intended as an exhaustive and ``punishing'' data set, where
simplification tools cannot solve the problems (since the inputs are
non-trivial knots), and where our branching search
needs to conclusively determine that certain normal surfaces
do \emph{not} exist (so there is no chance for early termination).

As is common in computational topology, our first step before
running any other algorithms is to \emph{simplify} the input
triangulations using a suite of local moves; we use the ready-made
(and polynomial time) suite from {\regina} \cite{burton13-regina}.

The resulting triangulations are large, with up to $n=50$ tetrahedra.
It is a testament to the strength of the simplification suite that
for all $2977$ inputs we only require a single pass through
Algorithm~\ref{alg-search-broad} (the branching search)---we never need
to crush away ``junk'' discs or spheres and run the search again
(as in step~\ref{en-alg-search} of Algorithm~\ref{alg-unknot}).
Similarly, we find that in practice the simplification suite
always produces a one-vertex triangulation immediately, with no need
for the complex operations described in Lemma~\ref{l-onevtx}.


\begin{figure}[tb]
    \centering
    \includegraphics[scale=0.4]{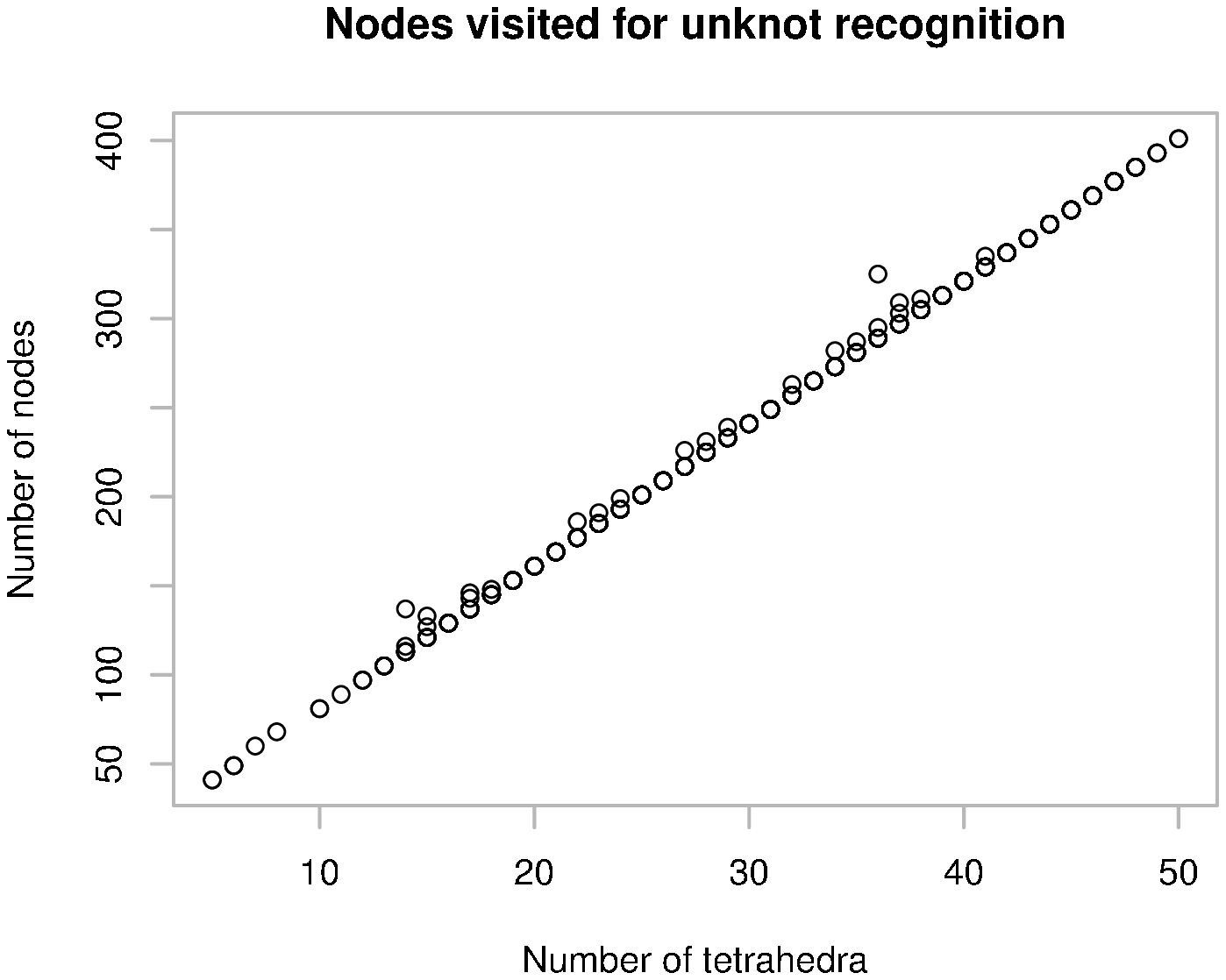} \qquad
    \includegraphics[scale=0.4]{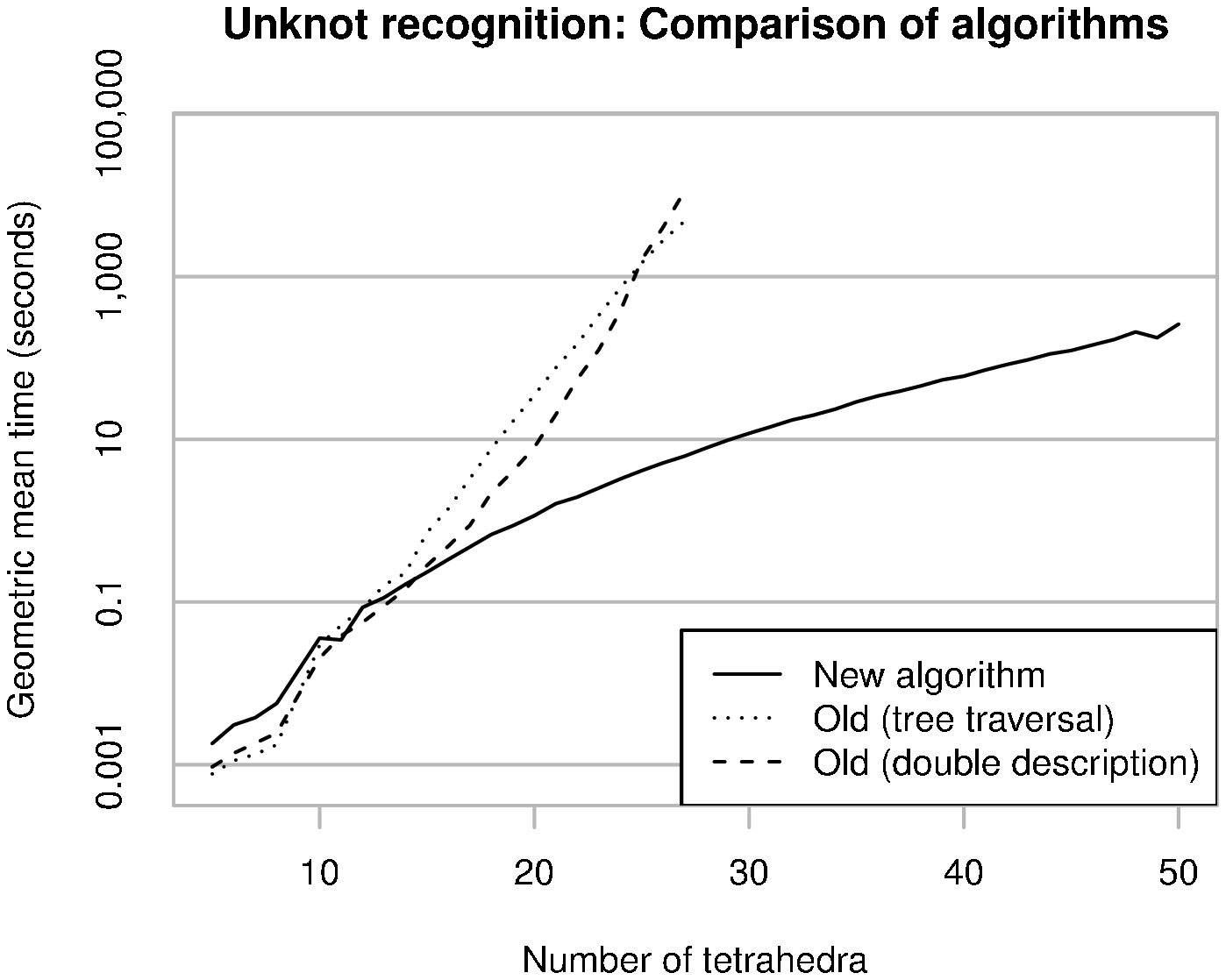}
    \caption{Performance summary for unknot recognition}
    \label{fig-knotnodes}
\end{figure}

The first plot of Figure~\ref{fig-knotnodes} counts how many nodes the
new algorithm visits in the branching
search tree (as described in Section~\ref{s-search});\footnote{%
    What ``visit'' means in this context relies on
    details of the implementation, but up to asymptotics this is
    not important.}
this measure is crucial because it determines how many
linear programming problems we solve,
and is the source of the worst-case exponential running time.
The results are unequivocally linear:
in every case the number of nodes lies between $8n$ and $10n$, and
for all but 24 of the inputs the figure is precisely $8n+1$
(the smallest possible, indicating that we never need to
branch on quadrilateral coordinates at all).\footnote{%
    The exact figure of $8n+1$ is an artefact of the
    implementation: the triangle branching is implemented not as
    a single branch with $4n$ options, but using a binary branching tree
    with $4n+1$ leaves.}
This linear growth in the number of nodes corresponds to
a polynomial running time, and explains the exceptional performance
of the algorithm.

The second plot of Figure~\ref{fig-knotnodes} measures running times,
comparing the new algorithm
against prior state-of-the-art algorithms that rely on an
\emph{enumeration} of candidate normal discs.
These prior algorithms use two different enumeration techniques:
one based on the
double description method \cite{burton10-dd}, and one based on a more
recent tree traversal method \cite{burton13-tree}.
These prior algorithms are
also implemented in {\regina}, and their code is heavily optimised; see
\cite{burton13-regina} for an overview of each.

\begin{figure}[tb]
    \centering
    \includegraphics[scale=0.4]{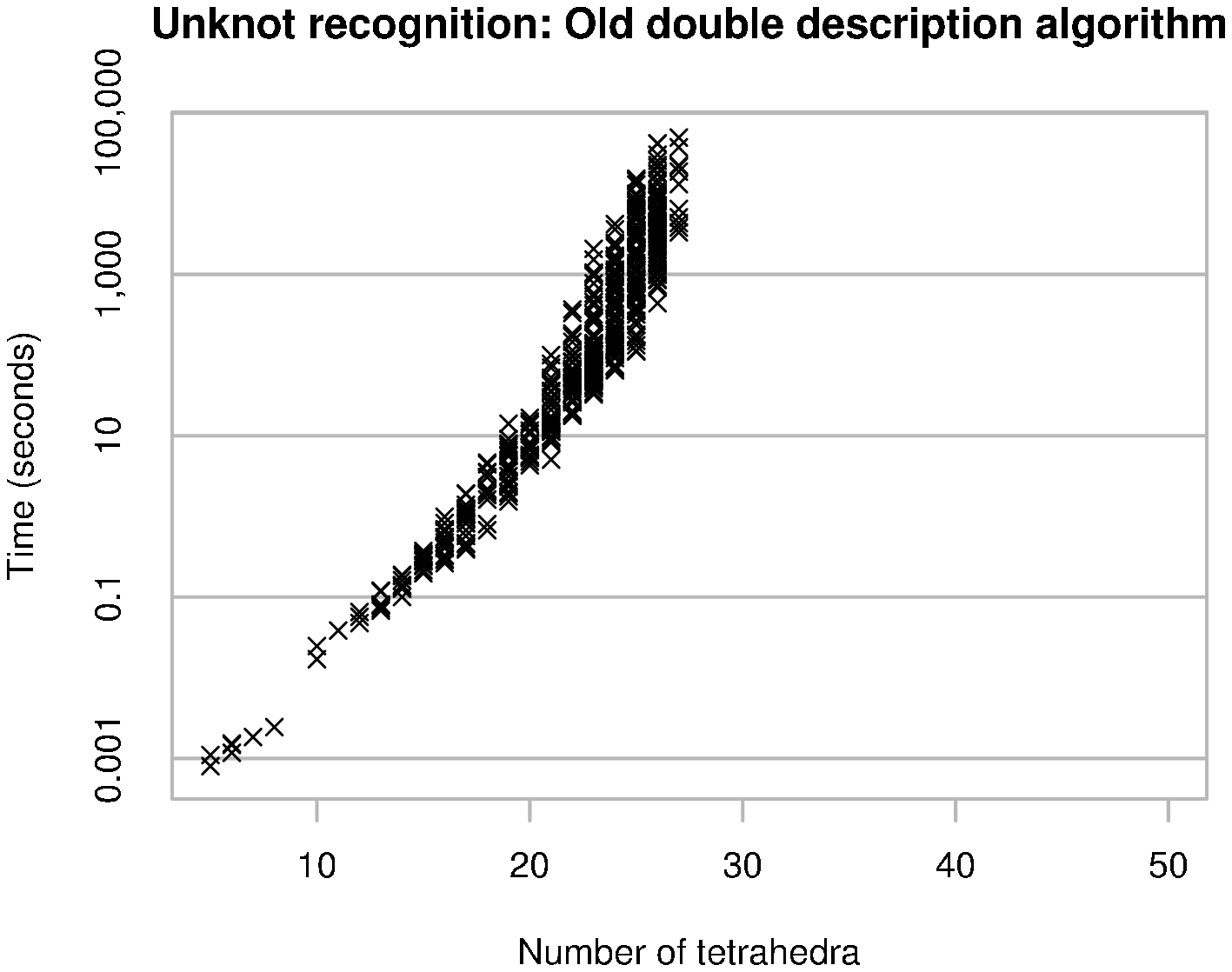} \qquad
    \includegraphics[scale=0.4]{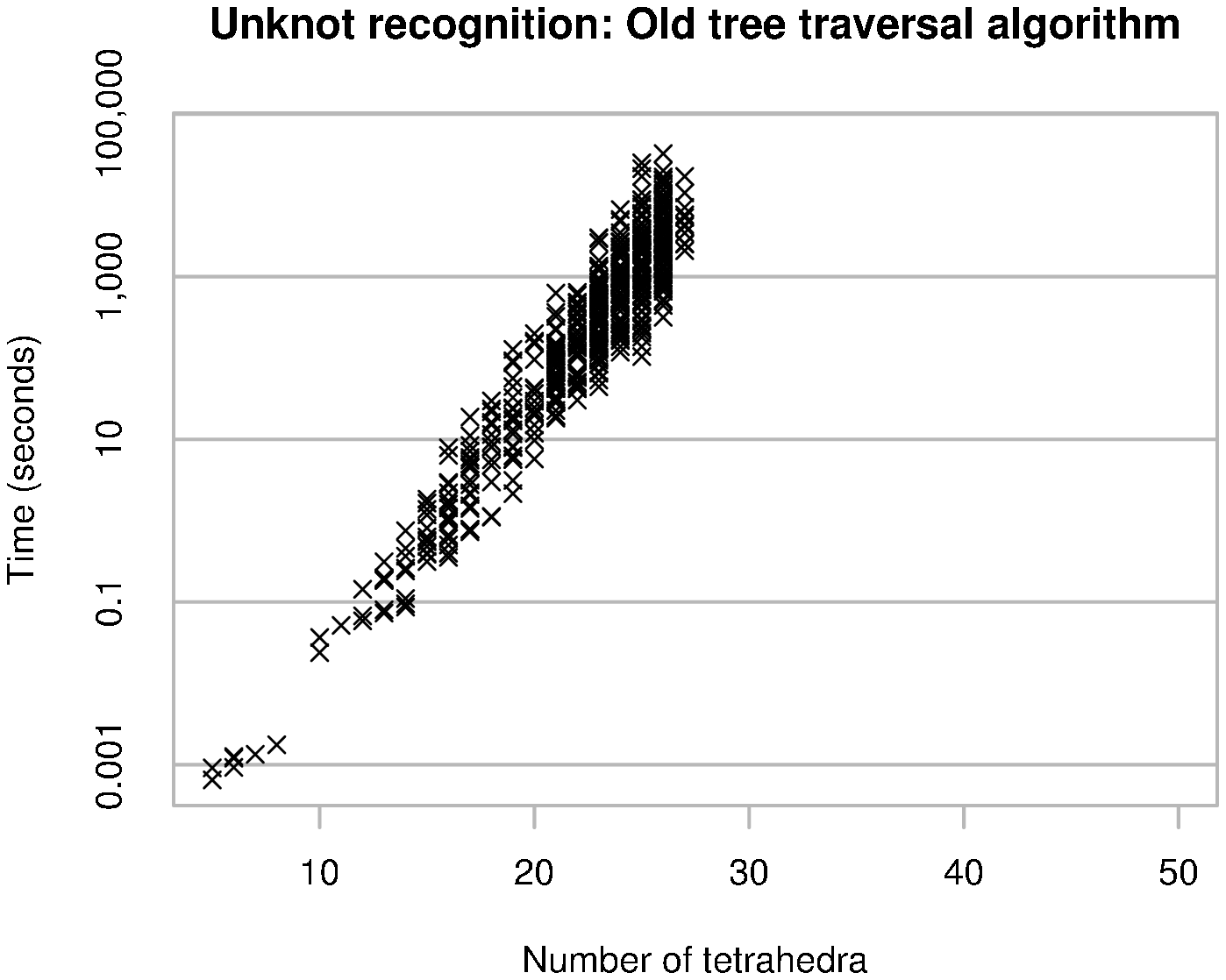} \bigskip \\
    \includegraphics[scale=0.4]{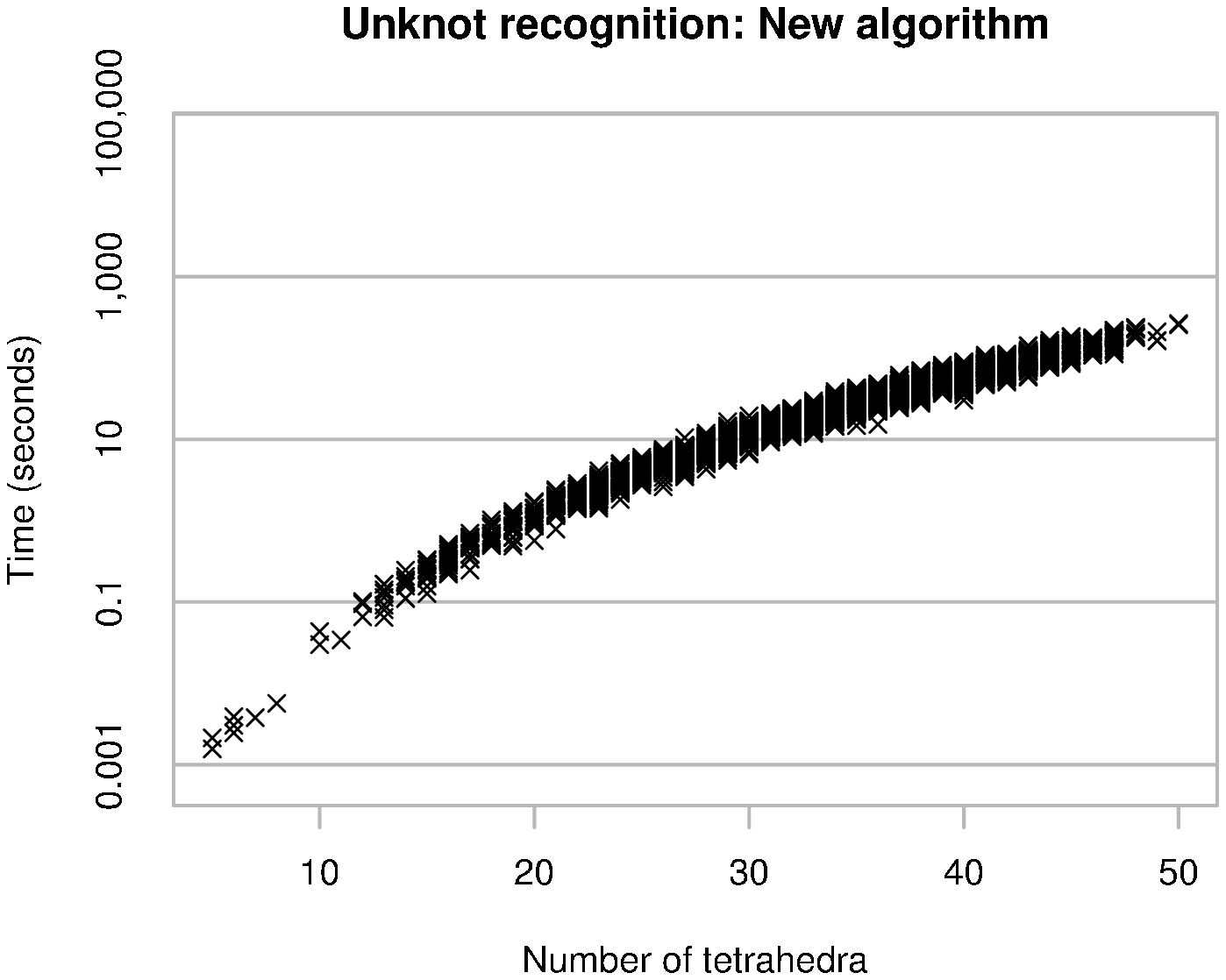}
    \caption{Detailed running times for unknot recognition
        on the first 2977 prime knots}
    \label{fig-knotperf}
\end{figure}

All running times are single-threaded
(measured on a 2.93~GHz Intel Core~i7); note that the time axis in the plot
is logarithmic.  The summary plot in Figure~\ref{fig-knotnodes}
aggregates all running times
for each number of tetrahedra $n$ using the geometric mean
(which benefits the older algorithms, since they have much wider variability).
Figure~\ref{fig-knotperf} shows the individual running times for every
input under each algorithm.

The results are striking.
When viewed on this log scale, the linear profiles of the older algorithms
indicate a clear exponential-time behaviour.
Moreover,
for these older algorithms we could only use the first $515$ input knots
(as sorted by increasing $n$),
because running times became too large to proceed any
further---a linear regression on $\log(\mathrm{time})$
suggests that for the largest case with $n=50$ the older tree
traversal algorithm would take $\sim 6000$ years, and the older double
description method would take $\sim 40,000$ years.  In contrast,
the new algorithm solved all $2977$ input cases (including
the largest case with $n=50$) in under 5~minutes each.


\section{3-sphere recognition and prime decomposition} \label{s-sphere}

Looking beyond unknot recognition,
we can adapt our new algorithm for other topological problems,
such as 3-sphere recognition and prime decomposition.  We briefly sketch
the key ideas here.

Given a triangulation $\tri$ that represents a closed orientable 3-manifold,
\emph{3-sphere recognition} asks whether this manifold is topologically
equivalent to the 3-sphere, and \emph{prime decomposition} breaks
this 3-manifold into a collection of ``prime factors'' (which combine to
make the original manifold using the topological operation of connected sum).

For both problems, early algorithms were mathematical breakthroughs but
algorithmically cumbersome and impractical to implement
\cite{jaco95-algorithms-decomposition,rubinstein95-3sphere}.
They have since enjoyed great improvements in both
implementability and efficiency, but like unknot recognition
they still require worst-case exponential time.
See \cite{burton13-regina} for a modern formulation of these algorithms
as they appear today.

For both algorithms, the central operations---and
their exponential-time bottle\-necks---are steps that search for normal or
``octagonal almost normal'' spheres within a triangulation $\tri$.
An \emph{octagonal almost normal} surface is like a normal surface, but
in addition to triangles and quadrilaterals we require precisely one
octagonal piece in precisely one tetrahedron.

We can adapt Algorithm~\ref{alg-search-broad} (our new search based on
branching and feasibility tests) for these tasks.
To locate normal spheres, we can essentially use
Algorithm~\ref{alg-search-broad} as it stands;
to locate almost normal spheres,
we use a variant that works with a different coordinate
system (which supports octagonal pieces).
These new search procedures can be
dropped directly into modern 3-sphere recognition and prime decomposition
implementations \cite{burton13-regina}.

\begin{figure}[tb]
    \centering
    \includegraphics[scale=0.4]{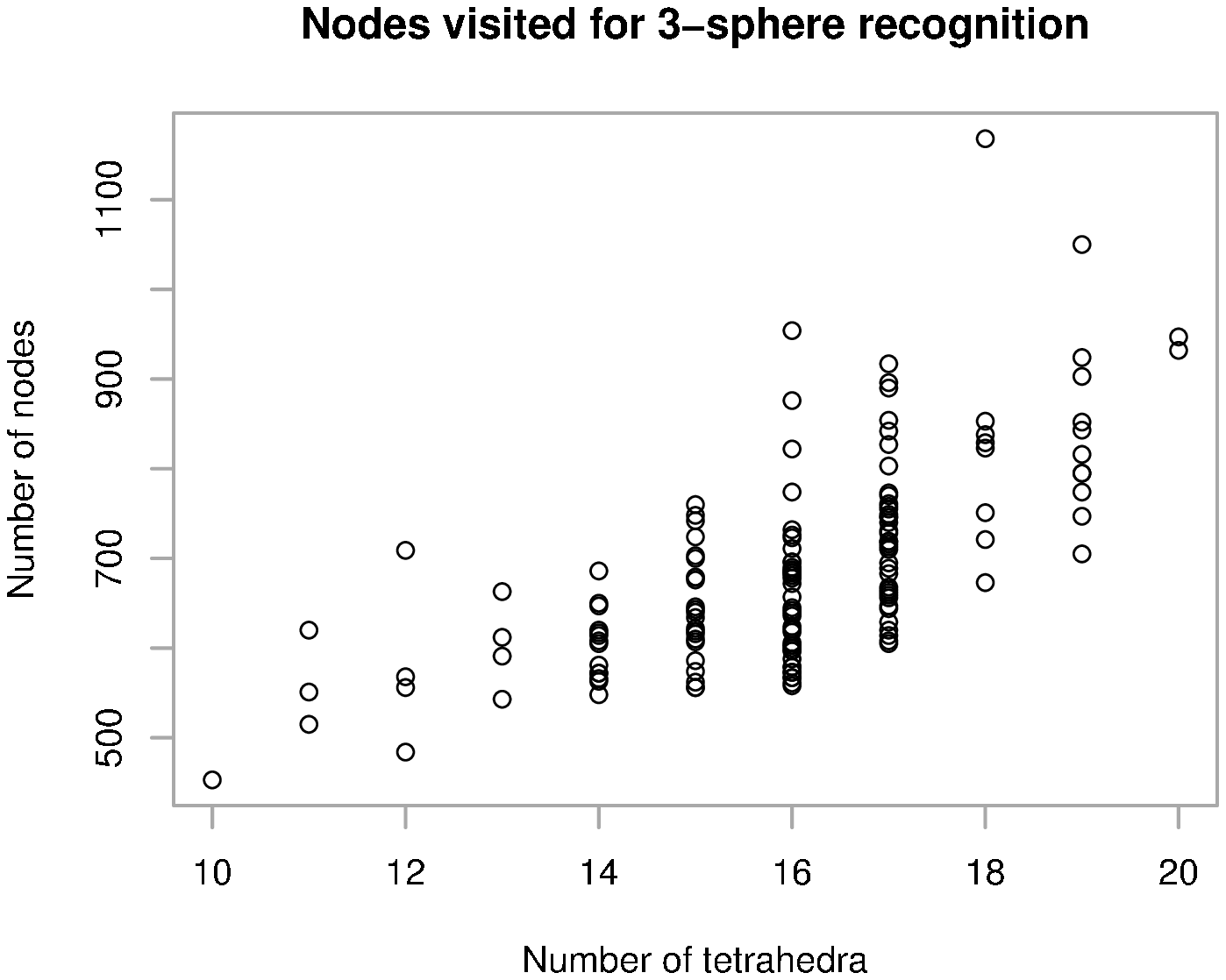} \qquad
    \includegraphics[scale=0.4]{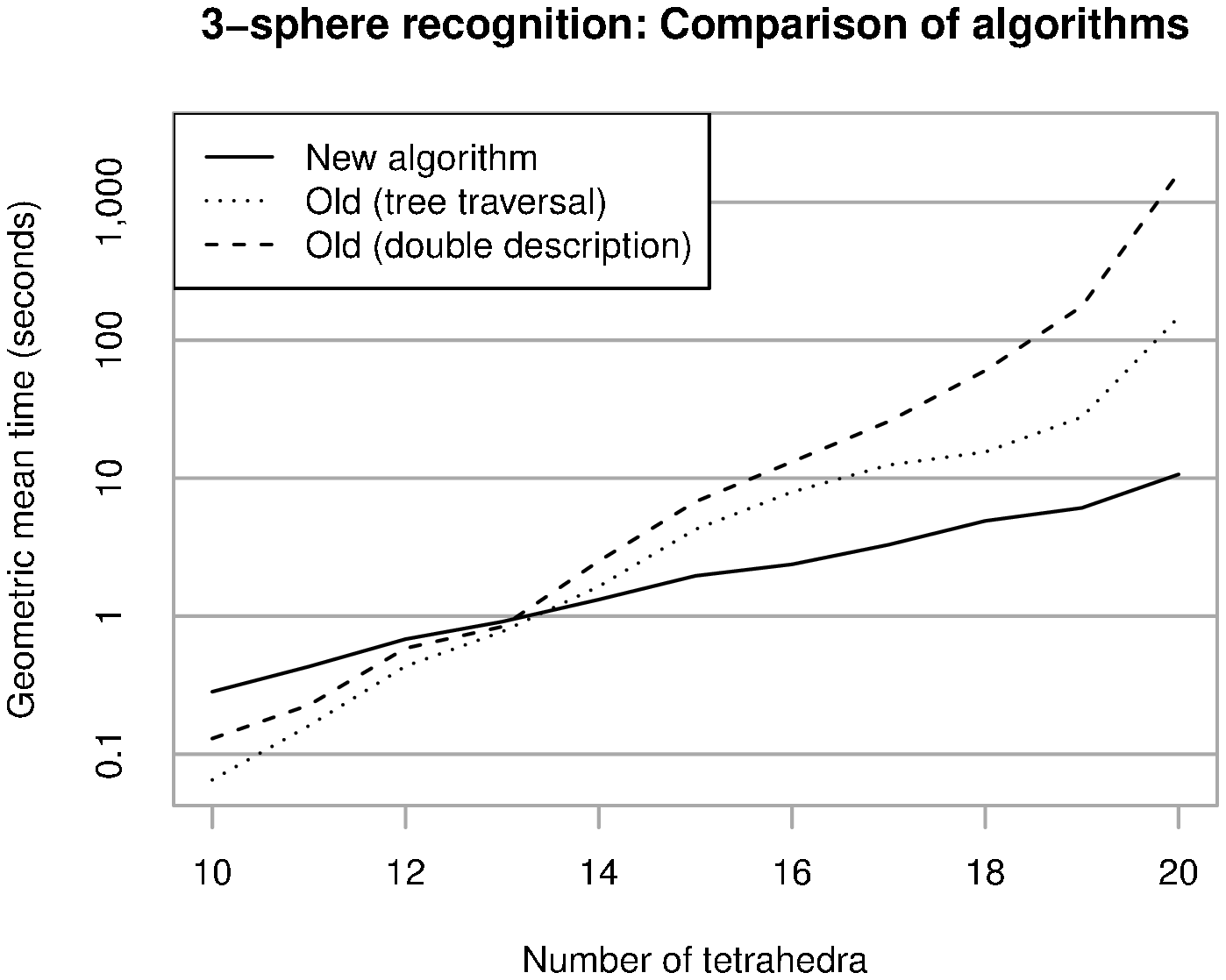}
    \caption{Performance summary for 3-sphere recognition}
    \label{fig-homnodes}
\end{figure}

As before, we empirically test our new algorithm for 3-sphere
recognition over a comprehensive and ``punishing'' data set.
This time our test inputs are the first $150$ homology spheres in the
Hodgson-Weeks census \cite{hodgson94-closedhypcensus};
again, since none of the inputs are 3-spheres, these are
difficult cases to solve: simplification tools cannot solve the problem
alone, and there is no opportunity for early termination of our
branching search.

\begin{figure}[tb]
    \centering
    \includegraphics[scale=0.4]{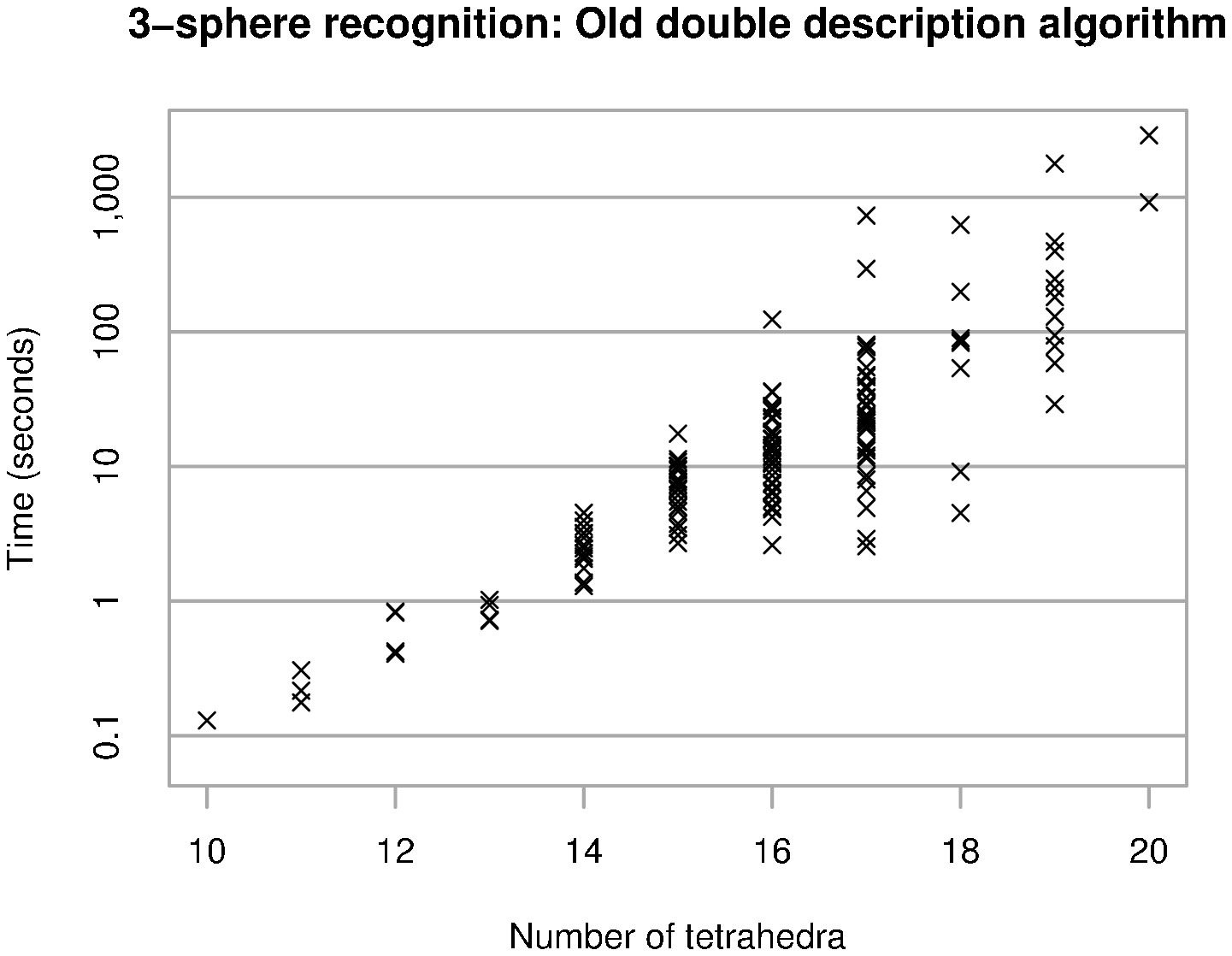} \qquad
    \includegraphics[scale=0.4]{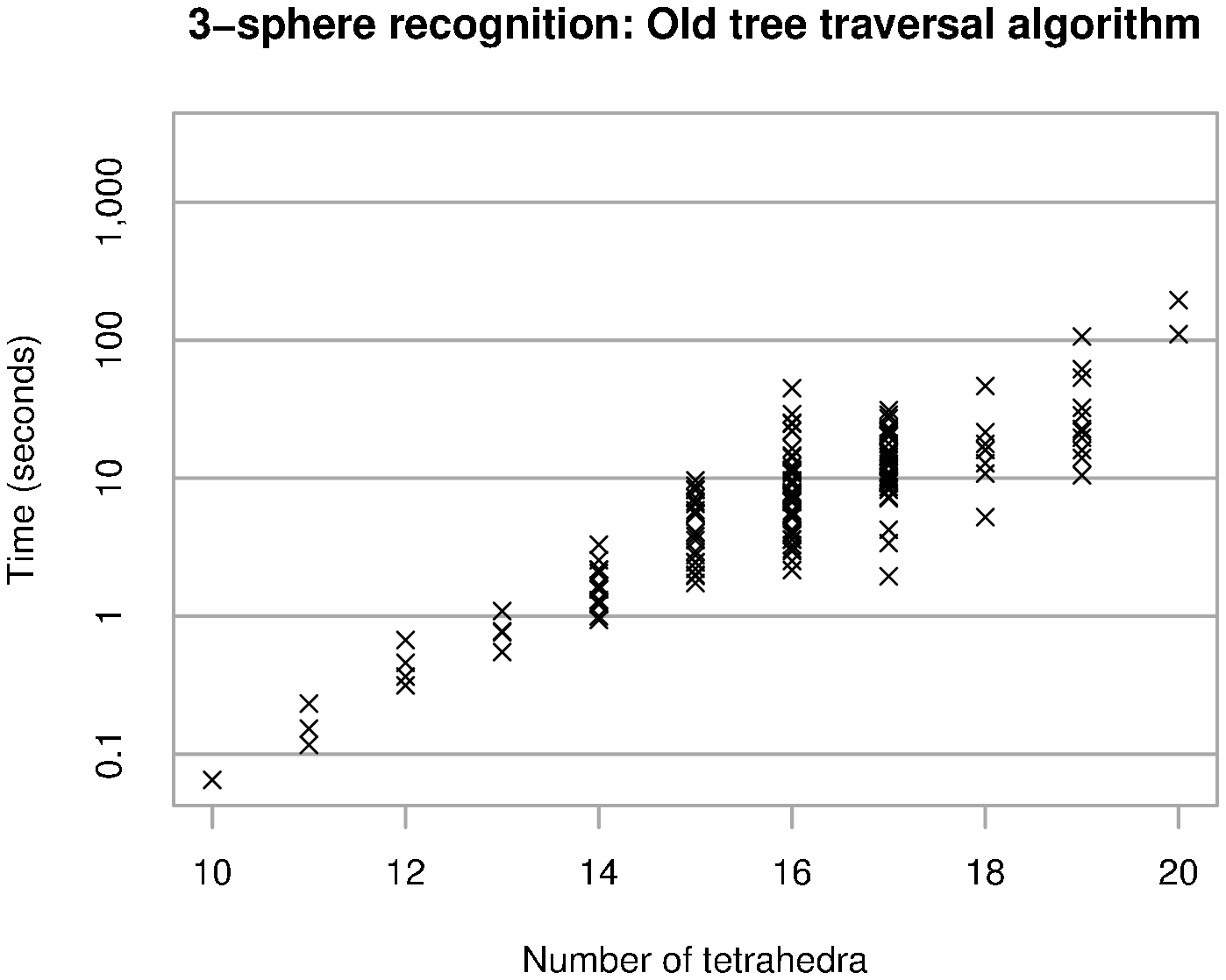} \bigskip \\
    \includegraphics[scale=0.4]{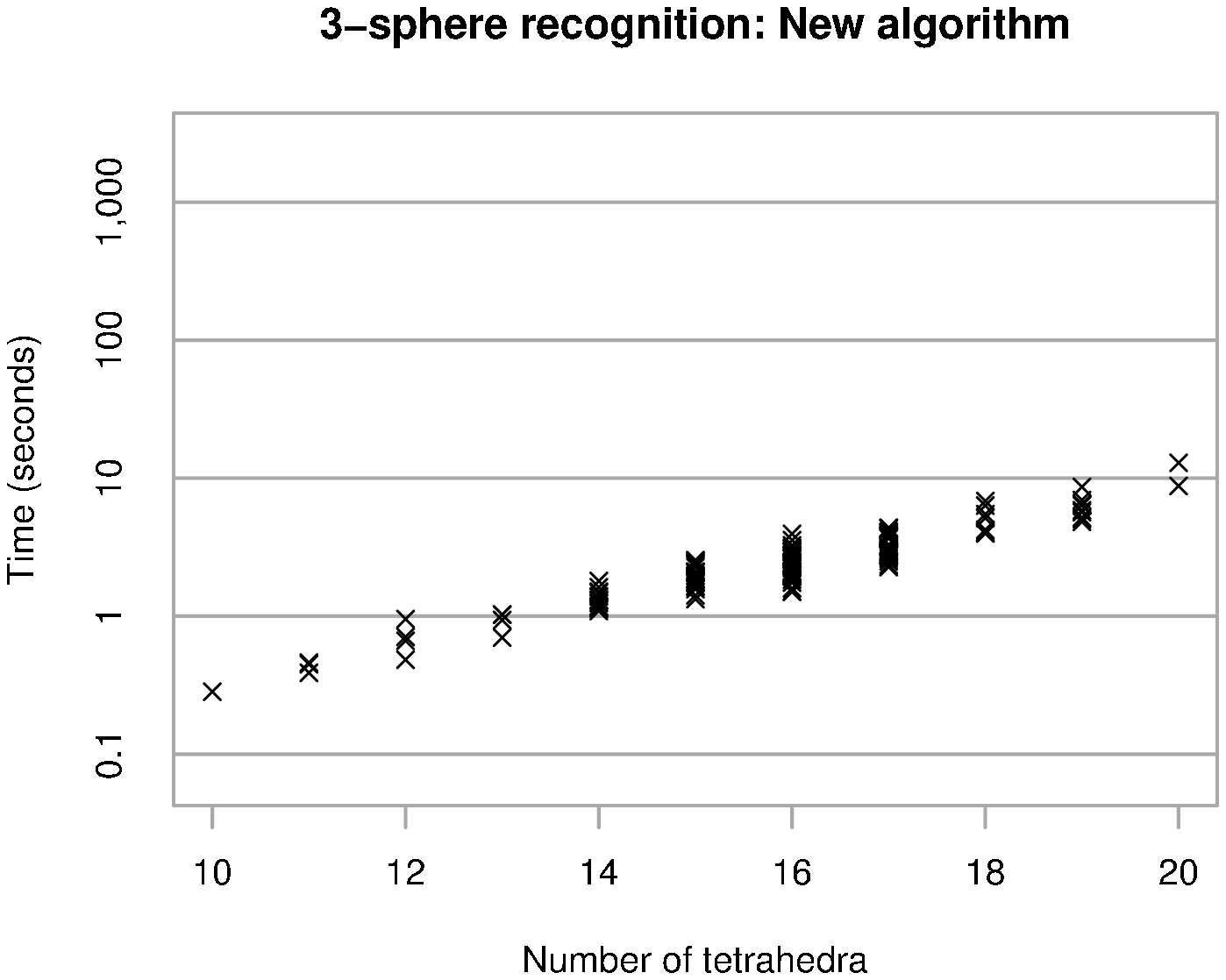}
    \caption{Detailed running times for 3-sphere recognition
        on the first 150 homology spheres}
    \label{fig-homperf}
\end{figure}

The triangulations of these homology spheres have up to $n=20$ tetrahedra.
Figure~\ref{fig-homnodes} shows the number of nodes visited in the
search tree for each input, and summarises the performance of the old
and new algorithms.  Figure~\ref{fig-homperf} shows detailed running times
for each input case.

The results for 3-sphere recognition are less clear-cut than for
unknot recognition, and no longer exhibit a polynomial-time profile;
we expect this is due to the introduction of almost normal surfaces.
Nevertheless, the results are still extremely pleasing.
The new algorithm, though slower for
small cases, becomes markedly faster than the prior algorithms as $n$ increases.
For the final case ($n=20$), the new algorithm
runs over $15$ times faster than the prior tree
traversal algorithm and roughly $223$ times faster than the prior
double description algorithm.
Furthermore, as is evident from Figure~\ref{fig-homperf},
the new algorithm exhibits much less variability in running times.

We do not test the prime decomposition algorithm separately.
This is essentially because the prime decomposition algorithm appears as
a sub-component of 3-sphere recognition (which we have already tested
above); see \cite{burton13-regina,jaco03-0-efficiency} for details.


\section{Discussion} \label{s-disc}

Although our new algorithm for unknot recognition remains exponential
time in theory, the observed polynomial growth in practice is
extremely pleasing, and indeed very exciting---this moves the
study of unknot recognition into a new phase, where we can now solve
it conclusively and quickly ``in practice'',
albeit without theoretical guarantees on the running time.  This is
reminiscent of the \emph{simplex method} for linear programming,
an algorithm that requires exponential time in the worst case but which,
despite the existence of polynomial-time alternatives
\cite{hacijan79-polylp,karmarkar84-new}, still enjoys widespread
use because of its extremely fast ``typical'' behaviour in practice.

We have a good understanding of \emph{why} the simplex
method works well in practice: it has been shown to be polynomial time in
settings such as average, generic and smoothed complexity
\cite{smale83-simplex,spielman01-smoothed,vershik83-simplex}.
In contrast, there are no such results for unknot recognition; more
generally, average, generic and smoothed results are extremely scarce
in the study of topological algorithms on 3-dimensional triangulations
(the setting for this and many other knot algorithms).
Reasons include:
\begin{itemize}
    \item Combinatorial models of random triangulations produce an
    overwhelming amount of ``junk'':
    the probability that a random pairwise gluing of faces from $n$ tetrahedra
    yields a 3-manifold triangulation tends to zero as
    \mbox{$n \to \infty$} 
    \cite{dunfield06-random-covers},
    and there is no known polynomial-time algorithm for sampling a
    random 3-manifold triangulation \cite{reiner12-mfo-problems}.

    \item ``Walking'' through the space of 3-manifold triangulations is
    difficult because the diameter of this space could be extremely large:
    for knot complements the best known bounds involve
    exponentially high towers of exponentials \cite{mijatovic05-knot}.
\end{itemize}

In Section~\ref{s-sphere} we showed how to adapt this new
unknot recognition algorithm
to the problems of 3-sphere recognition and prime decomposition.
Looking further, these techniques have potential to extend to an even broader
range of topological problems:
promising candidates include 0-efficiency algorithms \cite{jaco03-0-efficiency},
and the difficult but important problem of finding incompressible surfaces
\cite{jaco84-haken}.

Of course knots and their complements can grow significantly larger than
12 crossings and 50 tetrahedra, and it is difficult to know whether the
consistent polynomial-time behaviour seen in our experiments is
maintained as $n$ grows.  As an initial exploration into this, we have run
the same experiments over the 20-crossing \emph{dodecahedral knots}
$D_f$ and $D_s$ \cite{aitchison92-cubings}.
These are larger knots that exhibit remarkable properties
\cite{neumann92-arithmetic}, and that lie well beyond the scope of the
{\knotinfo} database.

For both $D_f$ and $D_s$ we see the same polynomial-time profile as in
our earlier experiments.  The knot complements have $n=72$ and $n=73$
tetrahedra (so the algorithm works in vector spaces of
dimension $504$ and $511$ respectively),
and for both knots Algorithm~\ref{alg-unknot}
visits precisely $8n+1$ nodes in the search tree (the smallest possible,
as discussed in Section~\ref{s-expt-results}).
Both running times are under half an hour.

It seems reasonable to believe that pathological inputs \emph{should} exist that
force Algorithm~\ref{alg-unknot} to traverse a genuinely exponential
search tree (just as, for the simplex method, it is possible to build
pathological linear programs that require an exponential number of
pivots \cite{klee72-simplex}).
However, it remains a considerable ongoing challenge to find them.

%
%

{\small

}

\end{document}